\documentclass[12pt]{amsart}
\usepackage{amssymb}
\usepackage{hyperref}
\usepackage{tabularx}
\usepackage{booktabs}
\usepackage{caption}

\topmargin -0.5cm
\oddsidemargin -0.5cm
\evensidemargin -0.5cm
\topskip     0pt
\headheight  0pt
\footskip   18pt
\textheight 22.5cm
\textwidth 17cm

\newtheorem{thm}{Theorem}[section]
\newtheorem{lem}[thm]{Lemma}
\newtheorem{cor}[thm]{Corollary}
\newtheorem{prop}[thm]{Proposition}
\newtheorem{ex}[thm]{Example}

\newtheorem*{prob*}{Open problem}

\theoremstyle{definition}

\newtheorem{defi}[thm]{Definition}

\theoremstyle{remark}

\newtheorem{rem}[thm]{Remark}
\newtheorem*{rem*}{Remark}


\DeclareMathOperator{\id}{id}

\DeclareMathOperator{\Aut}{Aut}

\newcommand{\kringel}{\mathbin{\raise1pt\hbox{$\scriptstyle\circ$}}}
\newcommand{\pkt}{\mathbin{\raise0pt\hbox{$\scriptstyle\bullet$}}}

\newcommand{\C}{\mathbb{C}}

\newcommand{\End}{{\rm End}}
\newcommand{\Der}{{\rm Der}}

\newcommand{\La}{\mathfrak{a}}

\newcommand{\Lg}{\mathfrak{g}}
\newcommand{\Lh}{\mathfrak{h}}

\newcommand{\Ll}{\mathfrak{l}}
\newcommand{\Ln}{\mathfrak{n}}

\newcommand{\Lr}{\mathfrak{r}}
\newcommand{\Ls}{\mathfrak{s}}

\newcommand{\im}{\mathop{\rm im}}

\newcommand{\al}{\alpha}
\newcommand{\be}{\beta}
\newcommand{\ga}{\gamma}
\newcommand{\de}{\delta}

\newcommand{\ka}{\kappa}
\newcommand{\la}{\lambda}

\newcommand{\ra}{\rightarrow}

\renewcommand{\phi}{\varphi}

\begin{document}


\title[PA-structures]{Rota--Baxter operators and post-Lie algebra structures on semisimple Lie algebras}

\author[D. Burde]{Dietrich Burde}
\author[V. Gubarev]{Vsevolod Gubarev}
\address{Fakult\"at f\"ur Mathematik\\
Universit\"at Wien\\
  Oskar-Morgenstern-Platz 1\\
  1090 Wien \\
  Austria}
\email{dietrich.burde@univie.ac.at}
\address{Fakult\"at f\"ur Mathematik\\
Universit\"at Wien\\
  Oskar-Morgenstern-Platz 1\\
  1090 Wien \\
  Austria}
\email{vsevolod.gubarev@univie.ac.at}

\date{\today}

\subjclass[2000]{Primary 17B20, 17D25}
\keywords{Post-Lie algebra, Rota--Baxter operator}

\begin{abstract}
Rota--Baxter operators $R$ of weight $1$ on $\Ln$ are in bijective correspondence to post-Lie algebra structures 
on pairs $(\Lg,\Ln)$, where $\Ln$ is complete. We use such Rota--Baxter operators to study the existence and 
classification of post-Lie algebra structures on pairs of Lie algebras $(\Lg,\Ln)$, where $\Ln$ is semisimple.
We show that for semisimple $\Lg$ and $\Ln$, with $\Lg$ or $\Ln$ simple, the existence of a post-Lie algebra structure 
on such a pair $(\Lg,\Ln)$ implies that $\Lg$ and $\Ln$ are isomorphic, and hence both simple. If $\Ln$ is semisimple,
but $\Lg$ is not, it becomes much harder to classify post-Lie algebra structures on $(\Lg,\Ln)$, or
even to determine the Lie algebras $\Lg$ which can arise. Here only the case $\Ln=\Ls\Ll_2(\C)$ was studied.
In this paper we determine all Lie algebras $\Lg$ such that there exists a post-Lie algebra structure on $(\Lg,\Ln)$ 
with $\Ln=\Ls\Ll_2(\C)\oplus \Ls\Ll_2(\C)$. 
 \end{abstract}

\maketitle

\section{Introduction}
Rota--Baxter operators were introduced by G. Baxter \cite{BAX} in $1960$ as a formal generalization of 
integration by parts for solving an analytic formula in probability theory. Such operators $R\colon A\ra A$
are defined on an algebra $A$ by the identity
\[
R(x)R(y) = R( R(x)y + xR(y) + \lambda xy )
\]
for all $x,y\in A$, where $\la$ is a scalar, called the {\em weight} of $R$.
These operators were then further investigated, by G.-C. Rota \cite{ROT}, Atkinson \cite{ATK},
Cartier \cite{CAR} and others. In the 1980s these operators were studied in integrable systems 
in the context of classical and modified Yang--Baxter equations \cite{SEM,BED}. 
Since the late 1990s, the study of Rota--Baxter operators has made great progress in many areas, 
both in theory and in applications \cite{HOB, BAG, GUO, GUK, GUR, BEG, GUB}. \\
Post-Lie algebras and post-Lie algebra structures also arise in many areas, e.g., in differential geometry and the study 
of geometric structures on Lie groups. Here post-Lie algebras arise 
as a natural common generalization of pre-Lie algebras \cite{HEL,KIM,SEG,BU5,BU19,BU24} and LR-algebras \cite{BU34, BU38}, 
in the context of nil-affine actions of Lie groups, see \cite{BU41}. A detailed account of the differential 
geometric context of post-Lie algebras is also given in \cite{ELM}.
On the other hand, post-Lie algebras have been introduced by 
Vallette \cite{VAL} in connection with the homology of partition posets and the study of Koszul operads. 
They have been studied by several authors in various contexts, e.g., for algebraic operad triples \cite{LOD}, 
in connection with modified Yang--Baxter equations, Rota--Baxter operators, universal enveloping algebras, double 
Lie algebras, $R$-matrices, isospectral flows, Lie-Butcher series and many other topics \cite{BAG, ELM, GUB}. 
There are several results on the existence and classification of post-Lie algebra structures, in particular
on commutative post-Lie algebra structures \cite{BU51,BU52,BU57}.\\
It is well-known \cite{BAG} that Rota--Baxter operators $R$ of weight $1$ on $\Ln$ are in bijective correspondence to 
post-Lie algebra structures on pairs $(\Lg,\Ln)$, where $\Ln$ is complete. In fact, RB-operators always yield
PA-structures.
So it is possible (and desirable) to use results on RB-operators for the existence and classification of 
post-Lie algebra structures. \\[0.2cm]
The paper is organized as follows. In section $2$ we give basic definitions of RB-operators and
PA-structures on pairs of Lie algebras. We summarize several useful results. For a complete Lie algebra $\Ln$ 
there is a bijection between PA-structures on $(\Lg,\Ln)$ and RB-operators of weight $1$ on $\Ln$. The PA-structure is
given by $x\cdot y=\{R(x),y\}$. Here we study the kernels of $R$ and $R+\id$. If $\Lg$ and $\Ln$ are not
isomorphic, then both $R$ and $R+\id$ have a non-trivial kernel. Moreover, if one of $\Lg$ or $\Ln$ is not
solvable, then at least one of $\ker(R)$ and $\ker(R+\id)$ is non-trivial. \\
In section $3$ we complete the classification of PA-structures on pairs of semisimple Lie algebras $(\Lg,\Ln)$,
where either $\Lg$ or $\Ln$ is simple. We already have shown the following in \cite{BU41}. 
If $\Lg$ is simple, and there exists a PA-structure on $(\Lg,\Ln)$, then also $\Ln$ is simple, and we have 
$\Lg\cong \Ln$ with $x\cdot y=0$ or $x\cdot y=[x,y]$. Here we deal now with the case that $\Ln$ is simple.
Again it follows that $\Lg$ and $\Ln$ are isomorphic. The proof via RB-operators uses results of Koszul \cite{KOS} and 
Onishchik \cite{ONI}. We also show a result concerning semisimple decompositions of Lie algebras. Suppose
that $\Lg=\Ls_1+\Ls_2$ is the vector space sum of two semisimple subalgebras of $\Lg$. Then $\Lg$ is semisimple.
As a corollary we show that the existence of a PA-structure on $(\Lg,\Ln)$ for $\Lg$ semisimple and $\Ln$ 
complete implies that $\Ln$ is semisimple. \\
In section $4$ we determine all Lie algebras $\Lg$ which can arise by PA-structures on $(\Lg,\Ln)$ with
$\Ln=\Ls\Ll_2(\C)\oplus \Ls\Ll_2(\C)$. This turns out to be much more complicated than the case $\Ln=\Ls\Ll_2(\C)$,
which we have done in \cite{BU41}. By Theorem $3.3$ of \cite{BU44}, $\Lg$ cannot be solvable unimodular. On the
other hand, the result we obtain shows that there are more restrictions than that.

\section{Preliminaries}

Let $A$ be a nonassociative algebra over a field $K$ in the sense of Schafer \cite{SCH}, with $K$-bilinear 
product $A\times A\ra A$, $(a,b)\mapsto ab$. We will assume that $K$ is an arbitrary field of characteristic 
zero, if not said otherwise. 

\begin{defi}
Let $\la\in K$. A linear operator $R\colon A\ra A$ satisfying the identity
\begin{align}
R(x)R(y) & = R\bigl(R(x)y+xR(y)+\la xy\bigr)  \label{rota}
\end{align}
for all $x,y\in A$ is called a {\em Rota--Baxter operator on $A$ of weight $\la$}, or just {\em RB-operator}.
\end{defi}

Two obvious examples are given by $R=0$ and $R=\la \id$, for an arbitrary nonassociative algebra. 
These are called the {\em trivial} RB-operators.
The following elementary lemma was shown in \cite{GUO}, Proposition $1.1.12$.

\begin{lem}
Let $R$ be an RB-operator on $A$ of weight $\lambda$. Then $-R-\la \id$ is an  RB-operator on $A$ of weight $\lambda$,
and $\la^{-1}R$ is an  RB-operator on $A$ of weight $1$ for all $\la\neq 0$.
\end{lem}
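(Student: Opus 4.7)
The plan is to verify both assertions by direct substitution into the defining identity \eqref{rota}. Since the two claims are independent and each amounts to a short algebraic manipulation, I would not expect any genuine obstacle; the only thing to watch is the bookkeeping of signs and of the weight factor.

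For the first claim, I set $R'=-R-\la\id$ and compute the two sides of the RB-identity of weight $\la$ for $R'$. On the left I expand
\[
R'(x)R'(y)=(-R(x)-\la x)(-R(y)-\la y)=R(x)R(y)+\la R(x)y+\la xR(y)+\la^2 xy,
\]
and then apply \eqref{rota} to replace $R(x)R(y)$ by $R(R(x)y+xR(y)+\la xy)$. For the right-hand side, I first simplify the argument:
\[
R'(x)y+xR'(y)+\la xy=-R(x)y-xR(y)-\la xy,
\]
so that
\[
R'\bigl(R'(x)y+xR'(y)+\la xy\bigr)=R(R(x)y+xR(y)+\la xy)+\la R(x)y+\la xR(y)+\la^2 xy,
\]
which matches the left-hand side. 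This is the end of the first verification.

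For the second claim, assuming $\la\neq 0$, I set $S=\la^{-1}R$ and compare
\[
S(x)S(y)=\la^{-2}R(x)R(y)=\la^{-2}R\bigl(R(x)y+xR(y)+\la xy\bigr)
\]
with
\[
S\bigl(S(x)y+xS(y)+xy\bigr)=\la^{-1}R\bigl(\la^{-1}R(x)y+\la^{-1}xR(y)+xy\bigr),
\]
which, by linearity of $R$, also equals $\la^{-2}R(R(x)y+xR(y)+\la xy)$. Thus $S$ satisfies the RB-identity of weight $1$. The whole argument is routine linear-algebraic expansion, and no step should present a real obstacle.
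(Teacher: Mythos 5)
Your verification is correct: both expansions are accurate, the signs and weight factors are tracked properly, and the two sides match in each case. The paper itself gives no proof of this lemma --- it simply cites Proposition 1.1.12 of Guo's book --- and your direct substitution into the identity \eqref{rota} is exactly the standard argument one would find there, so there is nothing further to compare.
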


It is also easy to verify the following results.  

\begin{prop}\cite{BEG}\label{2.3}
Let $R$ be an RB-operator on $A$ of weight $\lambda$ and $\psi\in\Aut(A)$. Then $R^{(\psi)}=\psi^{-1}R\psi$ is an
RB-operator on $A$ of weight $\la$.
\end{prop}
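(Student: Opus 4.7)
The plan is to verify identity \eqref{rota} for $R^{(\psi)} = \psi^{-1} R \psi$ by direct computation, exploiting two facts: that $\psi$ (and hence $\psi^{-1}$) is an algebra homomorphism, so it commutes with the product, and that $R$ satisfies the weight $\lambda$ Rota--Baxter identity on the elements $\psi(x),\psi(y)$.

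Concretely, I would start from the left-hand side
\[
R^{(\psi)}(x)\, R^{(\psi)}(y) = \psi^{-1}\bigl(R(\psi(x))\bigr)\cdot \psi^{-1}\bigl(R(\psi(y))\bigr),
\]
pull $\psi^{-1}$ outside the product using the automorphism property, and then apply \eqref{rota} to the pair $(\psi(x),\psi(y))$ inside. This yields $\psi^{-1}$ applied to $R$ of a three-term sum. Inserting $\psi \psi^{-1} = \id$ in front of that inner sum converts the outer $\psi^{-1} R$ into $R^{(\psi)}$ applied to $\psi^{-1}$ of the sum, and then distributing $\psi^{-1}$ across the three products (again using that it is an algebra map) rewrites each term as $R^{(\psi)}(x)y$, $x R^{(\psi)}(y)$, and $\lambda xy$ respectively. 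Collecting these gives exactly
\[
R^{(\psi)}\bigl(R^{(\psi)}(x) y + x R^{(\psi)}(y) + \lambda xy\bigr),
\]
which is \eqref{rota} for $R^{(\psi)}$ of weight $\lambda$.

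There is no real obstacle: the only thing to be careful about is using the automorphism property of $\psi$ on both sides (once to pull $\psi^{-1}$ out of a product, and once to push it through the three summands after applying the Rota--Baxter relation for $R$). The weight is preserved because the coefficient $\lambda$ is untouched by conjugation. Since the argument uses nothing beyond bilinearity and $\psi$ being an algebra automorphism, it works uniformly for any nonassociative algebra $A$ in the sense of Schafer, matching the generality of the statement.
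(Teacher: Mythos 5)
Your computation is correct: conjugation by the automorphism $\psi$ commutes with the product on both sides of the identity, and applying \eqref{rota} to the pair $(\psi(x),\psi(y))$ before reinserting $\psi\psi^{-1}$ gives exactly the Rota--Baxter identity for $R^{(\psi)}$ with the same weight $\lambda$. The paper omits the proof (citing \cite{BEG} and calling it easy to verify), and your argument is precisely the standard verification intended there.
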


\begin{prop}\cite{GUO}
Let $B$ be a countable direct sum of an algebra $A$. Then the operator $R$ defined on $B$ by
\[
R((a_1,a_2,\ldots ,a_n,\ldots ))=(0,a_1,a_1+a_2,a_1+a_2+a_3,\ldots )
\]
is an RB-operator on $B$ of weight $1$.
\end{prop}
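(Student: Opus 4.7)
The plan is to verify identity \eqref{rota} with $\la=1$ componentwise, after rewriting $R$ in terms of partial sums so that both sides of the identity telescope against each other.

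First I would fix notation: for $x=(a_1,a_2,\ldots)\in B$ set $S_0(x)=0$ and $S_n(x)=a_1+a_2+\cdots +a_n$ for $n\geq 1$. Then the definition of $R$ reads simply
\[
R(x)_n = S_{n-1}(x)\quad (n\geq 1),
\]
so the $n$-th component of the left-hand side $R(x)R(y)$ equals $S_{n-1}(x)S_{n-1}(y)$. (I would remark once that, since the sums $S_{n-1}(x)$ need not eventually vanish, $B$ is to be understood so that $R$ maps it to itself — either by taking the direct product, or by reading the equation componentwise as an identity in $A$.)

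Next I would compute the $n$-th component of the argument $z:=R(x)y+xR(y)+xy$, namely
\[
z_n = S_{n-1}(x)\,b_n + a_n\,S_{n-1}(y) + a_n b_n.
\]
The key observation, obtained by expanding $(S_{n-1}(x)+a_n)(S_{n-1}(y)+b_n)$, is
\[
z_n = S_n(x)S_n(y) - S_{n-1}(x)S_{n-1}(y).
\]
Therefore, summing from $k=1$ to $n-1$, the definition of $R$ gives
\[
R(z)_n = S_{n-1}(z) = \sum_{k=1}^{n-1}\bigl(S_k(x)S_k(y)-S_{k-1}(x)S_{k-1}(y)\bigr) = S_{n-1}(x)S_{n-1}(y),
\]
by telescoping, which matches $R(x)R(y)_n$. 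Since $n$ is arbitrary, \eqref{rota} holds with $\la=1$.

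There is really no obstacle here: the only subtlety worth flagging is the well-definedness/interpretation of $R$ on a direct sum (since the running sums need not have finite support), and the heart of the calculation is the single identity $z_n = S_n(x)S_n(y)-S_{n-1}(x)S_{n-1}(y)$, which makes the Rota--Baxter relation collapse to a telescoping identity.
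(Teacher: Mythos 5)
Your verification is correct: the identity $z_n=S_n(x)S_n(y)-S_{n-1}(x)S_{n-1}(y)$ follows from expanding $(S_{n-1}(x)+a_n)(S_{n-1}(y)+b_n)$, and the telescoping sum then gives $R(z)_n=S_{n-1}(x)S_{n-1}(y)=(R(x)R(y))_n$, which is exactly the Rota--Baxter identity of weight $1$. The paper itself gives no proof here --- the proposition is quoted from \cite{GUO} --- so there is nothing to compare against; your componentwise computation is the standard argument. Your caveat about well-definedness is also apt: since the partial sums of a finitely supported sequence are eventually constant but not eventually zero, $R$ genuinely lands in the direct product rather than the direct sum, so the statement should be read either over $\prod_{n\ge 1}A$ or componentwise, as you indicate.
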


\begin{prop}\label{2.5}
Let $B=A\oplus A$ and $\psi\in \Aut(A)$. Then the operator $R$ defined on $B$ by
\begin{align}
R((a_1,a_2)) & =(0,\psi(a_1))
\end{align}
is an RB-operator on $B$ of weight $1$. Furthermore the operator $R$ defined on $B$ by
\begin{align}
R((a_1,a_2)) & =(-a_1,-\psi(a_1))
\end{align}
is an RB-operator on $B$ of weight $1$.
\end{prop}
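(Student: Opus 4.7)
The plan is to verify the Rota--Baxter identity $R(x)R(y) = R(R(x)y + xR(y) + xy)$ (the case $\la=1$ of \eqref{rota}) directly in each of the two cases. Since the product on $B = A \oplus A$ is componentwise, I would write $x = (a_1,a_2)$ and $y = (b_1,b_2)$ and reduce each component to an expression in $A$, the main tool being that $\psi$ is an algebra automorphism, so $\psi(ab) = \psi(a)\psi(b)$.

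For the first operator $R(a_1,a_2)=(0,\psi(a_1))$, I would compute $R(x)R(y) = (0, \psi(a_1)\psi(b_1))$ on the left, and on the right observe that the first component of $R(x)y + xR(y) + xy$ is simply $a_1b_1$, so that applying $R$ gives $(0,\psi(a_1b_1)) = (0,\psi(a_1)\psi(b_1))$; this matches, and the second components (which are killed by $R$) need not even be tracked. For the second operator $R(a_1,a_2) = (-a_1,-\psi(a_1))$, I would compute $R(x)R(y) = (a_1b_1, \psi(a_1)\psi(b_1)) = (a_1b_1,\psi(a_1b_1))$, while the first component of $R(x)y + xR(y) + xy$ becomes $-a_1b_1 - a_1b_1 + a_1b_1 = -a_1b_1$; applying $R$ then yields $(a_1b_1,\psi(a_1b_1))$, matching the left-hand side.

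There is no real obstacle here: both verifications are routine bilinear expansions. It is worth noting, however, that Lemma $2.2$ does \emph{not} reduce the second claim to the first, since $-R-\id$ applied to the first operator gives $(a_1,a_2)\mapsto(-a_1,-a_2-\psi(a_1))$, which is different from the second operator in the statement. Hence the two operators must be treated separately, although the computations are strictly parallel.
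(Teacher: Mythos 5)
Your proposal is correct and follows exactly the paper's route: a direct verification of the weight-$1$ Rota--Baxter identity using the componentwise product on $A\oplus A$ and the multiplicativity of $\psi$, with the paper likewise treating the second operator by a separate (parallel) computation rather than deducing it from the first. Your side remark that $-R-\id$ does not transform the first operator into the second is accurate and a worthwhile clarification of why the paper's ``follows similarly'' is not an application of Lemma $2.2$.
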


\begin{proof}
Let $x=(a_1,a_2)$ and $y=(b_1,b_2)$. Then we have
\begin{align*}
R(R(x)y+xR(y)+\la xy) & = R((0,\psi(a_1)b_2+(0,a_2\psi(b_1))+(a_1b_1,a_2b_2)) \\
                      & = (0,\psi(a_1b_1)) \\
                      & = (0,\psi(a_1)\psi(b_1))\\
                      & = R(x)R(y).
\end{align*}
The second claim follows similarly.
\end{proof}

\begin{prop}\cite{HOB}\label{2.6}
Let $A=A_1\oplus A_2$, $R_1$ be an RB-operator of weight $\la$ on $A_1$, $R_2$ be an RB-operator of weight $\la$
on $A_2$. Then the operator $R\colon A\ra A$ defined by $R((a_1,a_2))=(R_1(a_1),R_2(a_2))$ is an RB-operator of weight $\la$
on $A$.
\end{prop}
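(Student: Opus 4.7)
The plan is a direct componentwise verification, exploiting that on the direct sum $A = A_1 \oplus A_2$ the multiplication is given by $(a_1,a_2)(b_1,b_2) = (a_1 b_1,\, a_2 b_2)$, so the operator $R(a_1,a_2) = (R_1(a_1), R_2(a_2))$ preserves this coordinate decomposition. The Rota--Baxter identity \eqref{rota} is bilinear in its two arguments and, crucially, its two sides live entirely inside $A_1$ or $A_2$ once we project; hence it suffices to check the identity in each coordinate separately.

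Concretely, fix $x=(a_1,a_2)$ and $y=(b_1,b_2)$. First I would compute
\[
R(x)R(y) = (R_1(a_1), R_2(a_2))\,(R_1(b_1), R_2(b_2)) = \bigl(R_1(a_1) R_1(b_1),\, R_2(a_2) R_2(b_2)\bigr),
\]
and apply the RB-identity of weight $\la$ for $R_1$ on $A_1$ and for $R_2$ on $A_2$ to the respective components. This yields
\[
R(x)R(y) = \bigl(R_1(R_1(a_1)b_1 + a_1 R_1(b_1) + \la a_1 b_1),\, R_2(R_2(a_2)b_2 + a_2 R_2(b_2) + \la a_2 b_2)\bigr).
\]
On the other side, expanding $R(x)y + xR(y) + \la xy$ coordinatewise and then applying $R$, I obtain precisely the same pair. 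Comparing the two expressions gives $R(x)R(y) = R(R(x)y + xR(y) + \la xy)$.

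There is essentially no obstacle here: the argument is purely a componentwise unfolding of the definition of the product on $A_1\oplus A_2$ together with Definition~2.1 applied in each summand. The only thing to be careful about is to state explicitly at the start that the algebra structure on $A$ is the direct-sum one, so that the mixed terms $a_1 b_2$ and $a_2 b_1$ vanish and the two RB-identities in $A_1$ and $A_2$ can be invoked independently.
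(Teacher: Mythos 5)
Your componentwise verification is correct and is exactly the standard argument; the paper itself gives no proof of Proposition~\ref{2.6}, simply citing \cite{HOB}, so there is nothing to compare beyond noting that your computation is the expected one. Your closing caveat about the direct-sum multiplication killing the mixed terms $a_1b_2$ and $a_2b_1$ is the only point of substance, and you handle it properly.
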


\begin{prop}\cite{GUO}\label{2.7}
Let $A=A_1\dot +A_2$ be the direct vector space sum of two subalgebras. Then the operator $R$ defined on $A$ by
\begin{align}
R(a_1+a_2) & =-\la a_2
\end{align}
for $a_1\in A_1$ and  $a_2\in A_2$ is an RB-operator on $A$ of weight $\la$.
\end{prop}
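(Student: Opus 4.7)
The plan is a direct verification by substitution into identity (\ref{rota}), exploiting the subalgebra property of $A_1$ and $A_2$ together with the direct sum decomposition.

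First I would fix $x = a_1 + a_2$ and $y = b_1 + b_2$ with $a_1,b_1 \in A_1$ and $a_2,b_2 \in A_2$, and immediately record that $R(x) = -\la a_2$ and $R(y) = -\la b_2$. The left-hand side of (\ref{rota}) is then simply $\la^2 a_2 b_2$.

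Next I would expand the argument of $R$ on the right-hand side, namely $R(x)y + xR(y) + \la xy$, using bilinearity. The point I expect to do most of the work is the following cancellation: the cross terms $\la a_2 b_1$ (from $R(x)y$) and $\la a_1 b_2$ (from $xR(y)$) carry opposite signs to the corresponding cross terms coming from $\la xy$, and the $\la a_2 b_2$ contributions conspire similarly. What remains after cancellation is $\la a_1 b_1 - \la a_2 b_2$. Here the hypothesis that $A_1$ and $A_2$ are subalgebras is essential, since it guarantees $a_1 b_1 \in A_1$ and $a_2 b_2 \in A_2$, so that this element is already written in its $A_1 \dot + A_2$ decomposition.

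Finally I would apply $R$ to this residual expression: by definition $R(\la a_1 b_1 - \la a_2 b_2) = -\la \cdot (-\la a_2 b_2) = \la^2 a_2 b_2$, matching the left-hand side. No step is a genuine obstacle; the only place where care is needed is the bookkeeping of cross terms and the invocation of the subalgebra property so that the final application of $R$ selects exactly the $A_2$-component.
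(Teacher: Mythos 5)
Your verification is correct: the cross terms cancel exactly as you describe, leaving $\la a_1b_1-\la a_2b_2$, and the subalgebra hypothesis enters precisely where you invoke it, to guarantee $a_1b_1\in A_1$ and $a_2b_2\in A_2$ so that the final application of $R$ picks out $-\la a_2b_2$. The paper gives no proof of this proposition (it is cited from \cite{GUO}), and your direct substitution into \eqref{rota} is exactly the standard argument one would supply.
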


We call such an operator {\it split}, with subalgebras $A_1$ and $A_2$. Note that the set of
all split RB-operators on $A$ is in bijective correspondence with all decompositions $A=A_1\dot + A_2$ as a direct sum
of subalgebras. 

\begin{lem}\cite{BEG}
Let $R$ be an RB-operator of nonzero weight $\la$ on an algebra $A$. Then $R$ is split if and only if
$R(R+\la \id)=0$.
\end{lem}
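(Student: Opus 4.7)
The plan is to take $A_1 = \ker R$ and $A_2 = \ker(R + \la\id)$ in the backward direction, and to verify directly in the forward direction.

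For the easy direction, assume $R$ is split with decomposition $A = A_1 \dot+ A_2$ so that $R(a_1 + a_2) = -\la a_2$. Then $(R + \la\id)(a_1 + a_2) = -\la a_2 + \la a_1 + \la a_2 = \la a_1 \in A_1$, and applying $R$ to an element of $A_1$ gives $0$. So $R(R + \la\id) = 0$.

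For the converse, suppose $R(R + \la\id) = 0$. First I would establish that $\im R \subseteq \ker(R + \la\id)$ and $\im(R+\la\id)\subseteq \ker R$: the first inclusion follows from expanding $R(R+\la\id)x = R^2 x + \la Rx = 0$, so $(R+\la\id)(Rx) = -\la Rx + \la Rx = 0$; the second is immediate from the hypothesis. Next, using the identity
\[
x = \tfrac{1}{\la}\bigl((R+\la\id)x - R(x)\bigr),
\]
I obtain $A = \ker R + \ker(R+\la\id)$, and the sum is direct since $x \in \ker R \cap \ker(R+\la\id)$ forces $\la x = 0$, hence $x = 0$ (using $\la \neq 0$).

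The main content is showing that both summands are subalgebras; this is where the Rota--Baxter identity must be used. For $x, y \in \ker R$, the identity \eqref{rota} reduces to $0 = R(\la xy)$, so $xy \in \ker R$. For $x, y \in \ker(R+\la\id)$, substituting $R(x) = -\la x$ and $R(y) = -\la y$ into \eqref{rota} gives $\la^2 xy = R(-\la xy - \la xy + \la xy) = -\la R(xy)$, hence $R(xy) = -\la xy$ and so $xy \in \ker(R+\la\id)$. Finally, for $x = a_1 + a_2$ with $a_1 \in \ker R$ and $a_2 \in \ker(R+\la\id)$, we compute $R(x) = 0 + (-\la a_2) = -\la a_2$, matching the split formula of Proposition~\ref{2.7}.

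The only genuine obstacle is the subalgebra verification for $\ker(R+\la\id)$, which requires recognising that the three middle terms in \eqref{rota} collapse correctly after substitution; everything else is essentially bookkeeping with the assumption $\la \neq 0$, which is what makes the two kernels complementary.
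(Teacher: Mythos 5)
Your proof is correct and complete. The paper itself states this lemma without proof, citing \cite{BEG}, so there is no in-paper argument to compare against; your argument is the natural one and matches what one finds in the literature. All the key points are handled properly: the identity $x = \tfrac{1}{\la}\bigl((R+\la\id)x - R(x)\bigr)$ gives $A=\ker R\dot+\ker(R+\la\id)$, and the Rota--Baxter identity is correctly specialized to show that both kernels are closed under multiplication (the substitution $R(x)=R(y)=-\la$ times the argument collapsing the right-hand side to $-\la R(xy)$ is exactly right).
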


\begin{lem}
Let $A=A_-\dot +A_0\dot +A_+$ be a direct vector space sum of subalgebras of $A$. Suppose that $R$ is an RB-operator
of weight $\la$ on $A_0$, $A_{-}$ is an $(R+\id)(A_0)$-module and $A_+$ is an $R(A_0)$-module. Define an operator $P$
on $A$ by
\begin{align}
P_{\mid A_{-}} & = 0, \; P_{\mid A_0} = R,\; P_{\mid A_+} = -\la \id .
\end{align}
Then $P$ is an RB-operator on $A$ of weight $\la$.
\end{lem}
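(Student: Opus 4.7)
The plan is to verify the Rota--Baxter identity
\[
P(x)P(y) = P\bigl(P(x)y + xP(y) + \lambda xy\bigr)
\]
by direct expansion with respect to the decomposition $A = A_- \dot+ A_0 \dot+ A_+$. Writing $x = x_- + x_0 + x_+$ and $y = y_- + y_0 + y_+$, one has $P(x) = R(x_0) - \lambda x_+$ and $P(y) = R(y_0) - \lambda y_+$, so
\[
P(x)P(y) = R(x_0)R(y_0) - \lambda R(x_0)y_+ - \lambda x_+ R(y_0) + \lambda^2 x_+ y_+,
\]
where $R(x_0)R(y_0) \in A_0$ by the subalgebra property of $A_0$ and the remaining three terms lie in $A_+$ by the $R(A_0)$-module hypothesis combined with $A_+ A_+ \subseteq A_+$.

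Next I would expand $Z := P(x)y + xP(y) + \lambda xy$ term by term. A direct check shows that the cross-terms of the form $x_+ y_-$, $x_+ y_0$, $x_- y_+$ and $x_0 y_+$ cancel in pairs: the $-\lambda$ contribution coming from $P(x)y$ or $xP(y)$ is cancelled by the $+\lambda$ contribution from $\lambda xy$. The surviving expression splits into three natural blocks: an $A_0$-block $R(x_0)y_0 + x_0 R(y_0) + \lambda x_0 y_0$; an $A_+$-block $R(x_0)y_+ + x_+ R(y_0) - \lambda x_+ y_+$ (which indeed lies in $A_+$ by the module hypothesis); and a third block $R(x_0)y_- + x_- R(y_0) + \lambda x_0 y_- + \lambda x_- y_0 + \lambda x_- y_-$.

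Applying $P$ block by block: on the $A_0$-block, $P$ acts as $R$ and the RB identity for $R$ on $A_0$ yields $R(x_0)R(y_0)$; on the $A_+$-block, $P = -\lambda \id$ scales the sum to $-\lambda R(x_0)y_+ - \lambda x_+ R(y_0) + \lambda^2 x_+ y_+$. These two contributions already reconstitute $P(x)P(y)$, so the identity will follow once I show that the third block lies entirely in $A_-$, where $P$ vanishes. For this I rewrite $x_0 = (R+\id)(x_0) - R(x_0)$ and $y_0 = (R+\id)(y_0) - R(y_0)$ inside $\lambda x_0 y_-$ and $\lambda x_- y_0$; the resulting $R(A_0)$-contributions cancel the pre-existing summands $R(x_0)y_-$ and $x_- R(y_0)$ of the third block, leaving only $\lambda x_- y_-$ together with products of $A_-$ with elements of $(R+\id)(A_0)$. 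These lie in $A_-$ by $A_- A_- \subseteq A_-$ and the hypothesis that $A_-$ is an $(R+\id)(A_0)$-module.

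The main obstacle is the final accounting in the third block: one must check that the $R(A_0)$-contributions produced by the rewriting of $x_0$ and $y_0$ exactly match the terms $R(x_0)y_-$ and $x_- R(y_0)$ that are already present, so that only $(R+\id)(A_0)$-type products survive. Once this bookkeeping is carried out, $P$ annihilates the third block, the remaining contributions match $P(x)P(y)$ summand by summand, and $P$ is an RB-operator of weight $\lambda$ on $A$.
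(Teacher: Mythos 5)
The paper itself gives no proof of this lemma (it is stated among the preliminaries as one of the results that are ``easy to verify''), so I can only judge your argument on its own terms. Your strategy --- expand both sides with respect to $A=A_-\dot + A_0\dot + A_+$, observe that the cross-terms $x_+y_-$, $x_+y_0$, $x_-y_+$, $x_0y_+$ cancel against $\la xy$, and then evaluate $P$ block by block --- is the right (indeed the only natural) approach, and your treatment of the $A_0$-block and the $A_+$-block is correct.

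The one genuine problem is the weight bookkeeping in the $A_-$-block. After the cancellations that block is
\[
R(x_0)y_- + x_-R(y_0) + \la x_0y_- + \la x_-y_0 + \la x_-y_-,
\]
and the exact grouping is
\[
(R+\la\id)(x_0)\,y_- \;+\; x_-\,(R+\la\id)(y_0) \;+\; \la x_-y_-,
\]
because $R(x_0)y_-+\la x_0y_-=(R+\la\id)(x_0)y_-$. Your substitution $x_0=(R+\id)(x_0)-R(x_0)$ produces $\la(R+\id)(x_0)y_- - \la R(x_0)y_-$, and the term $-\la R(x_0)y_-$ cancels the pre-existing $R(x_0)y_-$ only when $\la=1$; for $\la\neq 1$ a leftover $(1-\la)R(x_0)y_-$ survives, and nothing in the hypotheses places $R(A_0)\cdot A_-$ inside $A_-$ or inside $\ker P$. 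So as written your argument proves the lemma only for weight $1$. The correct observation is that the block lies in $A_-$ precisely when $A_-$ is an $(R+\la\id)(A_0)$-module; the hypothesis ``$(R+\id)(A_0)$-module'' in the statement is almost certainly a slip for ``$(R+\la\id)(A_0)$-module'' (the two coincide for $\la=1$, which is the only weight in which the paper ever invokes this lemma, for its triangular-split operators). You should either prove the corrected general statement with $(R+\la\id)$, or restrict explicitly to $\la=1$; the version you wrote down does neither cleanly.
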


\begin{defi}
Let $P$ be an RB-operator on $A$ defined as above such that not both $A_{-}$ and $A_+$ are zero.
Then $P$ is called {\it triangular-split}.
\end{defi}

We also recall the definition of post-Lie algebra structures on a pair of Lie algebras $(\Lg,\Ln)$ over $K$, see 
\cite{BU41}.

\begin{defi}\label{pls}
Let $\Lg=(V, [\, ,])$ and $\Ln=(V, \{\, ,\})$ be two Lie brackets on a vector space $V$ over 
$K$. A {\it post-Lie algebra structure}, or {\em PA-structure} on the pair $(\Lg,\Ln)$ is a 
$K$-bilinear product $x\cdot y$ satisfying the identities:
\begin{align}
x\cdot y -y\cdot x & = [x,y]-\{x,y\} \label{post1}\\
[x,y]\cdot z & = x\cdot (y\cdot z) -y\cdot (x\cdot z) \label{post2}\\
x\cdot \{y,z\} & = \{x\cdot y,z\}+\{y,x\cdot z\} \label{post3}
\end{align}
for all $x,y,z \in V$.
\end{defi}

Define by  $L(x)(y)=x\cdot y$ the left multiplication operator of the algebra $A=(V,\cdot)$. 
By \eqref{post3}, all $L(x)$ are derivations of the Lie algebra $(V,\{,\})$. Moreover, by \eqref{post2}, 
the left multiplication
\[
L\colon \Lg\ra \Der(\Ln)\subseteq \End (V),\; x\mapsto L(x)
\]
is a linear representation of $\Lg$. \\
If $\Ln$ is abelian, then a post-Lie algebra structure on $(\Lg,\Ln)$ corresponds to
a {\it pre-Lie algebra structure} on $\Lg$. In other words, if $\{x,y\}=0$ for all $x,y\in V$, then 
the conditions reduce to
\begin{align*}
x\cdot y-y\cdot x & = [x,y], \\
[x,y]\cdot z & = x\cdot (y\cdot z)-y\cdot (x\cdot z),
\end{align*}
i.e., $x\cdot y$ is a {\it pre-Lie algebra structure} on the Lie algebra $\Lg$, see \cite{BU41}. 

\begin{defi}
Let $x\cdot y$ be a PA-structure on $(\Lg,\Ln)$. If there exists a $\phi\in\End(V)$ such that
\[
x\cdot y=\{\phi(x),y\}
\]
for all $x,y\in V$, then $x\cdot y$ is called an {\em inner} PA-structure on $(\Lg,\Ln)$.
\end{defi}

The following result is proved in \cite{BAG}, Corollary $5.6$.

\begin{prop}\label{2.13}
Let $(\Ln,\{,\},R)$ be a Lie algebra together with a Rota--Baxter operator $R$ of weight $1$, i.e., a linear operator
satisfying
\[
\{R(x),R(y)\}=R(\{R(x),y\}+\{x,R(y)\}+\{x,y\})
\]
for all $x,y\in V$. Then 
\[
x\cdot y=\{R(x),y\}
\]
defines an inner PA-structure on $(\Lg,\Ln)$, where the Lie bracket of $\Lg$ is given by
\begin{align}\label{9}
[x,y] & =\{R(x),y\}-\{R(y),x\}+\{x,y\}.
\end{align}
\end{prop}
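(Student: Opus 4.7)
The plan is to verify three things: (i) the formula \eqref{9} defines a Lie bracket on $V$, so that $\Lg$ really is a Lie algebra; (ii) the product $x\cdot y=\{R(x),y\}$ satisfies the three post-Lie axioms \eqref{post1}--\eqref{post3}; and (iii) the resulting PA-structure is inner, which is automatic by taking $\phi=R$ in the definition.

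The key preliminary observation I would record is that $R$ turns out to be a Lie algebra homomorphism from $(V,[\,,])$ to $\Ln$. Using antisymmetry of $\{,\}$ one rewrites \eqref{9} as $[x,y]=\{R(x),y\}+\{x,R(y)\}+\{x,y\}$, and then the RB-identity of weight $1$ gives directly
\[
R([x,y])=R\bigl(\{R(x),y\}+\{x,R(y)\}+\{x,y\}\bigr)=\{R(x),R(y)\}.
\]

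With this identity in hand, the PA-axioms drop out quickly. Axiom \eqref{post1} is immediate from the definition \eqref{9}. Axiom \eqref{post3} asserts that $L(x)=\{R(x),\cdot\}$ is a derivation of $\{,\}$, which is just Jacobi for $\Ln$ applied to the triple $R(x),y,z$. Axiom \eqref{post2} unfolds to
\[
\{R([x,y]),z\}=\{R(x),\{R(y),z\}\}-\{R(y),\{R(x),z\}\},
\]
and this follows by substituting $R([x,y])=\{R(x),R(y)\}$ on the left and then applying Jacobi for $\Ln$ to the triple $R(x),R(y),z$ on the right.

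The main obstacle is step (i), verifying the Jacobi identity for the new bracket. Antisymmetry is clear from \eqref{9}. For Jacobi, I would expand the cyclic sum $\sum_{\mathrm{cyc}}[[x,y],z]$ using the rewritten form of $[\,,]$ and the homomorphism property $R([x,y])=\{R(x),R(y)\}$. The terms of the form $\{R([x,y]),z\}$ collapse to $\{\{R(x),R(y)\},z\}$ and sum cyclically to zero by Jacobi for $\Ln$ applied to $R(x),R(y),z$. The remaining contributions, of the shape $\{[x,y],R(z)\}$ and $\{[x,y],z\}$, expand into a handful of brackets whose cyclic sums regroup into further Jacobi triples for $\{,\}$, after which all terms cancel. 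The delicate part is the bookkeeping involved in tracking signs when pairing the $R$-decorated arguments coming from $[x,y]$ against the $R(z)$- and $z$-factors on the outside.
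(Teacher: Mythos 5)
The paper offers no proof of this proposition at all --- it is quoted from \cite{BAG}, Corollary $5.6$ --- so there is no in-paper argument to compare against; your proposal must be judged on its own. Most of it is correct and is the standard route: the pivot identity $R([x,y])=\{R(x),R(y)\}$, obtained by rewriting \eqref{9} as $[x,y]=\{R(x),y\}+\{x,R(y)\}+\{x,y\}$ and applying the weight-one Rota--Baxter identity, is exactly the right observation, and your verifications of \eqref{post1}, \eqref{post2} and \eqref{post3} are complete and correct (the latter two are indeed just the Jacobi identity of $\Ln$ applied to the triples $R(x),R(y),z$ and $R(x),y,z$ after substituting the pivot identity).

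The one step that fails as stated is in your sketch of the Jacobi identity for \eqref{9}. You claim that the terms $\{R([x,y]),z\}=\{\{R(x),R(y)\},z\}$ ``sum cyclically to zero by Jacobi for $\Ln$ applied to $R(x),R(y),z$''. This is not so: the cyclic sum $\{\{R(x),R(y)\},z\}+\{\{R(y),R(z)\},x\}+\{\{R(z),R(x)\},y\}$ is not an instance of the Jacobi identity (the outer arguments are $z,x,y$ rather than $R(z),R(x),R(y)$) and does not vanish on its own. What actually happens is that this sum cancels against the cyclic sum of the mixed terms $\{\{R(x),y\},R(z)\}+\{\{x,R(y)\},R(z)\}$: Jacobi for the triple $R(x),y,R(z)$ gives $\{\{R(x),y\},R(z)\}+\{\{y,R(z)\},R(x)\}=-\{\{R(z),R(x)\},y\}$, and cyclically. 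Likewise the terms $\{\{x,y\},R(z)\}$ cancel against $\{\{R(x),y\},z\}+\{\{x,R(y)\},z\}$, while the undecorated terms $\{\{x,y\},z\}$ do vanish cyclically by themselves. With this corrected grouping your expansion closes. If you want to avoid the bookkeeping entirely, note that the linear map $x\mapsto\bigl((R+\id)(x),R(x)\bigr)$ into $\Ln\oplus\Ln$ is injective and intertwines \eqref{9} with the componentwise bracket, since $(R+\id)([x,y])=\{(R+\id)(x),(R+\id)(y)\}$, $R([x,y])=\{R(x),R(y)\}$ and $[x,y]=\{(R+\id)(x),(R+\id)(y)\}-\{R(x),R(y)\}$; the Jacobi identity for \eqref{9} is then inherited from $\Ln\oplus\Ln$.
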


Note that $\ker(R)$ is a subalgebra of $\Ln$. For $x,y\in\ker(R)$ we have $R(\{x,y\})=0$.
Recall that a Lie algebra is called {\em complete}, if it has trivial center and only inner derivations.

\begin{prop}
Let $\Ln$ be a Lie algebra with trivial center. Then any inner PA-structure on $(\Lg,\Ln)$ arises by a Rota--Baxter
operator of weight $1$. Furthermore, if $\Ln$ is complete, then every PA-structure on $(\Lg,\Ln)$ is inner.
\end{prop}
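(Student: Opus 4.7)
The plan is to unpack the PA-axioms and convert the trivial center (respectively completeness) hypothesis on $\Ln$ into the needed conclusions. The two assertions are logically independent and I will treat them separately.

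For the first assertion, suppose $x\cdot y=\{\phi(x),y\}$ is an inner PA-structure on $(\Lg,\Ln)$. Axiom \eqref{post1} immediately yields
\[
[x,y]=\{\phi(x),y\}-\{\phi(y),x\}+\{x,y\},
\]
matching \eqref{9} with $R=\phi$. I next rewrite axiom \eqref{post2} as
\[
\{\phi([x,y]),z\}=\{\phi(x),\{\phi(y),z\}\}-\{\phi(y),\{\phi(x),z\}\}
\]
and apply the Jacobi identity of $\Ln$ to collapse the right hand side to $\{\{\phi(x),\phi(y)\},z\}$. Since this holds for every $z\in V$ and $\Ln$ has trivial center, we obtain $\phi([x,y])=\{\phi(x),\phi(y)\}$. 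Substituting the formula for $[x,y]$ and rewriting $-\{\phi(y),x\}=\{x,\phi(y)\}$ produces exactly the Rota--Baxter identity of weight $1$ for $\phi$. Axiom \eqref{post3} adds nothing further, as it is the Jacobi identity in $\Ln$ applied to the triple $\phi(x),y,z$.

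For the second assertion, let $\Ln$ be complete and let $x\cdot y$ be any PA-structure on $(\Lg,\Ln)$. Axiom \eqref{post3} states precisely that each $L(x)$ is a derivation of $\Ln$; by completeness it is inner, so $L(x)=\ad(\phi(x))$ for some $\phi(x)\in V$, with uniqueness of $\phi(x)$ coming from the trivial center. Linearity of $\phi$ then follows from linearity of $x\mapsto L(x)$ combined with this uniqueness, or equivalently by writing $\phi$ as the composition of $L$ with the inverse of the Lie algebra isomorphism $\ad\colon\Ln\to\Der(\Ln)$ afforded by completeness. Hence $x\cdot y=\{\phi(x),y\}$ is inner.

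The proof is essentially bookkeeping with no real obstacle. The main point to track is that the trivial center hypothesis gets invoked twice: once in the first half, to strip off the arbitrary $z$ from the identity $\{\phi([x,y])-\{\phi(x),\phi(y)\},z\}=0$, and once in the second half (built into completeness), to make the representative $\phi(x)$ of $L(x)$ unambiguously defined.
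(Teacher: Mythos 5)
Your proof is correct. The paper itself gives no computation here: it simply cites Proposition 2.10 and Lemma 2.9 of \cite{BU41} for the two assertions (and notes the alternative reference to Theorem 5.10 of \cite{BAG}). What you have written out is precisely the direct argument underlying those citations: axiom \eqref{post1} forces the bracket \eqref{9}, axiom \eqref{post2} combined with the Jacobi identity gives $\{\phi([x,y])-\{\phi(x),\phi(y)\},z\}=0$ for all $z$, and the trivial center lets you conclude $\phi([x,y])=\{\phi(x),\phi(y)\}$, which after substituting \eqref{9} is the weight-$1$ Rota--Baxter identity; for the second assertion, completeness turns each derivation $L(x)$ into $\ad(\phi(x))$ with $\phi=\ad^{-1}\circ L$ linear because $\ad\colon\Ln\to\Der(\Ln)$ is an isomorphism. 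Both halves are complete, and you correctly identify the two places where the trivial-center hypothesis is used. The only difference from the paper is that you have supplied the self-contained verification where the authors chose to defer to the literature.
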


\begin{proof}
The first claim follows from Proposition $2.10$ in \cite{BU41}. 
By Lemma $2.9$ in \cite{BU41} every PA-structure on $(\Lg,\Ln)$ with complete Lie algebra $\Ln$ is inner. 
The result can also be derived from the proof of Theorem $5.10$ in \cite{BAG}.
\end{proof}

\begin{cor}\label{2.15}
Let $\Ln$ be a complete Lie algebra. Then there is bijection between PA-structures on $(\Lg,\Ln)$ and RB-operators
of weight $1$ on $\Ln$.
\end{cor}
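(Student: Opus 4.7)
The plan is to assemble the corollary by combining Proposition \ref{2.13} (which produces a PA-structure from an RB-operator) with the preceding Proposition (which, for complete $\Ln$, guarantees that every PA-structure is inner and every inner PA-structure arises from an RB-operator of weight $1$). The only extra ingredient needed beyond these two results is the \emph{uniqueness} of the operator $\phi$ attached to an inner PA-structure, and this is where the triviality of $Z(\Ln)$ (part of completeness) is used.

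More concretely, the forward map sends an RB-operator $R$ of weight $1$ on $\Ln$ to the PA-structure $x\cdot y=\{R(x),y\}$ on $(\Lg,\Ln)$, with the bracket of $\Lg$ defined by \eqref{9}; Proposition \ref{2.13} shows this is a well-defined PA-structure. The backward map sends a PA-structure $x\cdot y$ on $(\Lg,\Ln)$ to the operator $\phi$ guaranteed by the previous Proposition: since $\Ln$ is complete it is inner, so $x\cdot y=\{\phi(x),y\}$, and the same Proposition then asserts that $\phi$ is an RB-operator of weight $1$ on $\Ln$. The assignment $x\cdot y\mapsto \phi$ is single-valued because $Z(\Ln)=0$: if $\{\phi_1(x),y\}=\{\phi_2(x),y\}$ for every $y\in V$, then $\phi_1(x)-\phi_2(x)\in Z(\Ln)=0$.

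It then remains to check that the two maps are mutual inverses. Starting from $R$, the associated PA-structure $x\cdot y=\{R(x),y\}$ is manifestly inner with operator $R$; by the uniqueness statement above, the backward map returns $R$. Starting from a PA-structure with inner presentation $x\cdot y=\{\phi(x),y\}$, applying the forward map to $\phi$ yields the product $\{\phi(x),y\}$ again; one also has to verify that the bracket on $\Lg$ reconstructed via \eqref{9} agrees with the original, but this is immediate from axiom \eqref{post1}, which gives $[x,y]=x\cdot y-y\cdot x+\{x,y\}=\{\phi(x),y\}-\{\phi(y),x\}+\{x,y\}$, precisely formula \eqref{9}.

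I do not expect any real obstacle here: the two propositions already do essentially all of the work, and the corollary is a matter of observing that (i) the two constructions are defined on the correct classes of objects and (ii) they compose to the identity in both directions. The only subtlety worth singling out is that completeness is used in two different ways, namely triviality of the center (for uniqueness of $\phi$) and the absence of outer derivations (to guarantee that every PA-structure is inner); both are packaged into the previous Proposition, but it is worth noting in the write-up that without $Z(\Ln)=0$ the backward map would not even be well-defined.
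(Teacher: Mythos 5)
Your proposal is correct and follows exactly the route the paper intends: the corollary is stated without a separate proof precisely because it is the combination of Proposition \ref{2.13} (RB-operator $\Rightarrow$ inner PA-structure) with the preceding Proposition (completeness $\Rightarrow$ every PA-structure is inner and arises from an RB-operator), plus the observation that $Z(\Ln)=0$ makes the assignment $x\cdot y\mapsto \phi$ well defined. Your explicit verification that the two maps are mutually inverse, using \eqref{post1} to recover the bracket \eqref{9}, is exactly the missing bookkeeping and contains no gap.
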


As we have seen, any inner PA-structure on $(\Lg,\Ln)$ with $Z(\Ln)=0$ arises by a Rota--Baxter
operator of weight $1$. For Lie algebra $\Ln$ with non-trivial center this need not be true.

\begin{ex}
Let $(e_1,e_2,e_3)$ be a basis of $V$ and $\Ln=\Lr_2(K)\oplus K$ with $\{e_1,e_2\}=e_2$. Then
\[
\phi=\begin{pmatrix} 1 & 0 & 0 \cr 0 & -1 & 0 \cr \al & \be & \ga \end{pmatrix}
\] 
defines an inner PA-structure on $(\Lg,\Ln)$ by $x\cdot y=\{\phi(x),y\}$ with $\Lg=\Ln$, i.e., with
$[e_1,e_2]=e_2$. But $\phi$ is not always a Rota--Baxter operator of weight $1$ for $\Ln$.
It is easy to see that this is the case if and only if $\be=0$.
\end{ex}

\begin{prop}
Let $x\cdot y$ be an inner PA-structure arising from an RB-operator $R$ on $\Ln$ of weight $1$. Then $R$ is also
an RB-operator of weight $1$ on $\Lg$, i.e., it satisfies
\begin{align*}
[R(x),R[y)] & = R([R(x),y]+[x,R(y)]+[x,y])
\end{align*}
for all $x,y\in V$.
\end{prop}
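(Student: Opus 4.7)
The plan is to derive a single compact identity that converts $\{\,,\,\}$-products of $R$-images into $R$ applied to an $[\,,\,]$-bracket, and then reduce the desired RB-identity for $\Lg$ to it.

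First I would rewrite the bracket on $\Lg$ given by \eqref{9} using skew-symmetry of $\{\,,\,\}$ as
\[
[x,y] = \{R(x),y\} + \{x,R(y)\} + \{x,y\},
\]
so that the right-hand side of the defining RB-identity for $R$ on $\Ln$ becomes exactly $R([x,y])$. Thus the RB-hypothesis on $\Ln$ is equivalent to the clean identity
\[
\{R(x),R(y)\} = R([x,y])\quad\text{for all }x,y\in V. \tag{$\ast$}
\]
This identity does the real work.

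Next I would compute the left-hand side $[R(x),R(y)]$ by applying the formula \eqref{9} with $x,y$ replaced by $R(x),R(y)$:
\[
[R(x),R(y)] = \{R(R(x)),R(y)\} - \{R(R(y)),R(x)\} + \{R(x),R(y)\}.
\]
Each of the three terms on the right has the form $\{R(a),R(b)\}$, so by $(\ast)$ they equal $R([R(x),y])$, $R([R(y),x])$, and $R([x,y])$ respectively. Pulling $R$ out by linearity and invoking the skew-symmetry of the $\Lg$-bracket to turn $-[R(y),x]$ into $[x,R(y)]$, the left-hand side becomes
\[
R\bigl([R(x),y] + [x,R(y)] + [x,y]\bigr),
\]
which is exactly the right-hand side of the claimed RB-identity on $\Lg$.

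I do not anticipate a genuine obstacle here: the proof is essentially a one-line reformulation of the weight-$1$ RB-axiom, combined with the fact that the definition of $[\,,\,]$ was tailored so that $R(\{R(x),y\}+\{x,R(y)\}+\{x,y\}) = R([x,y])$. The only mild care needed is the sign bookkeeping when using skew-symmetry to identify $-\{R(y),x\}$ with $\{x,R(y)\}$ and similarly for $[\,,\,]$.
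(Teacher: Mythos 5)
Your proposal is correct and is essentially the paper's own argument: both rest on the observation that the weight-$1$ RB-identity on $\Ln$ reads $\{R(x),R(y)\}=R([x,y])$ once the bracket \eqref{9} is rewritten via skew-symmetry, and both then expand using the definition of $[\,,\,]$ at $R(x),R(y)$. The only cosmetic difference is direction: the paper starts from $R([R(x),y]+[x,R(y)]+[x,y])$ and collapses it to $[R(x),R(y)]$, while you start from $[R(x),R(y)]$ and expand outward; the steps are the same.
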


\begin{proof}
Because of $R([x,y])=\{R(x),R(y)\}$ and the definition of $[x,y]$ we have
\begin{align*}
R([R(x),y]+[x,R(y)]+[x,y]) & = \{R(R(x)),R(y)\}+\{R(x),R(R(y))\}+\{R(x),R(y)\} \\
                           & =[R(x),R(y)]
\end{align*}
for all $x,y\in V$.
\end{proof}

\begin{cor}
Let $x\cdot y=\{R(x),y\}$ be a PA-structure on $(\Lg,\Ln)$ defined by an RB-operator $R$ of weight $1$ on $\Ln$.
Denote by $\Lg_i$ be the Lie algebra structure on $V$ defined by
\begin{align*}
[x,y]_0 & = \{x,y\},\\
[x,y]_{i+1} & = [R(x),y]_i-[R(y),x]_i+[x,y]_i, 
\end{align*}
for all $i\ge 0$. Then $R$ defines a PA-structure on each pair $(\Lg_{i+1},\Lg_i)$.
\end{cor}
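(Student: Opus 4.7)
The plan is to proceed by induction on $i \ge 0$, establishing simultaneously that $\Lg_i=(V,[\cdot,\cdot]_i)$ is a Lie algebra and that $R$ is an RB-operator of weight $1$ on it. The two ingredients available are Proposition~\ref{2.13}, which converts an RB-operator on a Lie algebra $\Ln$ into a PA-structure on a suitable pair $(\Lg,\Ln)$, and the preceding proposition, which guarantees that the same $R$ is also an RB-operator on the deformed bracket. Together these let us ``step up'' one level at a time.

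The base case $i=0$ is exactly the hypothesis of the corollary: $\Lg_0=\Ln$ with bracket $\{,\}$, and $R$ is an RB-operator of weight $1$ on $\Ln$. For the inductive step, assume $\Lg_i=(V,[\cdot,\cdot]_i)$ is a Lie algebra and $R$ is an RB-operator of weight $1$ on it. Apply Proposition~\ref{2.13} with $\Lg_i$ playing the role of $\Ln$: the product $x\cdot_i y:=[R(x),y]_i$ defines an inner PA-structure on the pair $(\Lg',\Lg_i)$, where the Lie bracket of $\Lg'$ is
\[
[R(x),y]_i-[R(y),x]_i+[x,y]_i.
\]
By the recursive definition this is precisely $[\cdot,\cdot]_{i+1}$, so $\Lg'=\Lg_{i+1}$ is in particular a Lie algebra, and $R$ gives a PA-structure on $(\Lg_{i+1},\Lg_i)$. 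To continue the induction one more step, invoke the preceding proposition applied at level $i$: since $R$ is an RB-operator of weight $1$ on $\Lg_i$ and defines a PA-structure on $(\Lg_{i+1},\Lg_i)$, it is also an RB-operator of weight $1$ on $\Lg_{i+1}$. This closes the induction.

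There is no real obstacle here beyond careful bookkeeping of indices: the statement is essentially an iterative application of Proposition~\ref{2.13} together with the transport result of the preceding proposition. The only point one should verify explicitly is that the bracket produced by Proposition~\ref{2.13} at step $i$ coincides with the recursively defined $[\cdot,\cdot]_{i+1}$, which is immediate by comparing formula~\eqref{9} (with $\{,\}$ replaced by $[\cdot,\cdot]_i$) with the recursion in the statement.
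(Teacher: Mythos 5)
Your proposal is correct and is exactly the argument the paper intends: the corollary is stated without proof as an immediate consequence of Proposition~\ref{2.13} together with the proposition directly preceding it, and your induction (apply Proposition~\ref{2.13} to $\Lg_i$ to produce the PA-structure on $(\Lg_{i+1},\Lg_i)$, then use the preceding proposition to transport the RB-property to $\Lg_{i+1}$) is precisely how one fills in that omitted step. The bookkeeping check that the bracket from formula~\eqref{9} at level $i$ agrees with the recursive definition of $[\cdot,\cdot]_{i+1}$ is the only content, and you have verified it.
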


We have $[x,y]_1=[x,y]$, and both $R$ and $R+\id$ are Lie algebra homomorphisms from $\Lg_{i+1}$ to  $\Lg_i$,
see Proposition $7$ in \cite{SEM}. Hence we obtain a composition of homomorphisms
\[
\Lg_i \xrightarrow[R+\id]{R} \Lg_{i-1}\xrightarrow[R+\id]{R} \cdots  \xrightarrow[R+\id]{R} \Lg_{0}
\]
So the kernels $\ker(R^i)$ and $\ker((R+\id)^i)$ are ideals in $\Lg_j$ for all $1\le i\le j$. \\[0.2cm]
For a Lie algebra $\Lg$, denote by $\Lg^{(i)}$ the derived ideals defined by $\Lg^{(1)}=\Lg$ and $\Lg^{(i+1)}=[\Lg^{(i)},\Lg^{(i)}]$
for $i\ge 1$. An immediate consequence of Proposition $\ref{2.13}$ is the following observation.

\begin{prop}
Let $x\cdot y=\{R(x),y\}$ be a PA-structure on $(\Lg,\Ln)$ defined by an RB-operator $R$ of weight $1$ on $\Ln$. Then we have
$\dim \Lg^{(i)}\le \dim \Ln^{(i)}$ for all $i\ge 1$. 
\end{prop}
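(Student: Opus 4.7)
The plan is to prove the stronger (and more structural) claim that $\Lg^{(i)}\subseteq\Ln^{(i)}$ as subspaces of $V$ for all $i\geq 1$; the dimension inequality is then immediate. I proceed by induction on $i$. The base case $i=1$ is trivial, since $\Lg^{(1)}=V=\Ln^{(1)}$ as subspaces of $V$.

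For the inductive step, suppose $\Lg^{(i)}\subseteq\Ln^{(i)}$. Take arbitrary $u,v\in \Lg^{(i)}$. By the formula in Proposition \ref{2.13},
\[
[u,v] \;=\; \{R(u),v\}-\{R(v),u\}+\{u,v\}.
\]
I want to argue each term on the right lies in $\Ln^{(i+1)}=\{\Ln^{(i)},\Ln^{(i)}\}$. For the last term this is clear from the inductive hypothesis $u,v\in\Ln^{(i)}$. For the first two terms I use that $R\colon\Lg\to\Ln$ is a Lie algebra homomorphism (which is the case $i=0$ of the remark following Proposition $2.17$, that $R$ and $R+\id$ are homomorphisms $\Lg_{i+1}\to\Lg_i$). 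A standard induction on the Lie homomorphism $R$ then gives $R(\Lg^{(i)})\subseteq\Ln^{(i)}$, so $R(u),R(v)\in\Ln^{(i)}$, while $u,v\in\Ln^{(i)}$ by the main inductive hypothesis. Hence $\{R(u),v\},\{R(v),u\}\in[\Ln^{(i)},\Ln^{(i)}]=\Ln^{(i+1)}$, and therefore $[u,v]\in\Ln^{(i+1)}$. Since this holds for all $u,v\in\Lg^{(i)}$, we obtain $\Lg^{(i+1)}=[\Lg^{(i)},\Lg^{(i)}]\subseteq\Ln^{(i+1)}$, closing the induction.

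The inclusion $\Lg^{(i)}\subseteq\Ln^{(i)}$ at once yields $\dim\Lg^{(i)}\leq\dim\Ln^{(i)}$. There is no real obstacle here; the only point that needs to be invoked carefully is that $R$ is a Lie homomorphism from $\Lg$ to $\Ln$, so that the image of the derived series of $\Lg$ under $R$ is controlled by the derived series of $\Ln$. This is precisely what makes the statement, as the paper notes, an immediate consequence of Proposition \ref{2.13}.
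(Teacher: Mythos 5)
Your proof is correct and is essentially the argument the paper intends: it is exactly the ``immediate consequence'' of Proposition \ref{2.13} combined with the fact, stated just before the proposition, that $R$ is a Lie algebra homomorphism from $\Lg$ to $\Ln$ (so $R(\Lg^{(i)})\subseteq\Ln^{(i)}$), yielding the subspace inclusion $\Lg^{(i)}\subseteq\Ln^{(i)}$ and hence the dimension bound. The paper omits these details, and your write-up supplies them correctly.
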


\begin{cor}
Let $x\cdot y$ be a PA-structure on $(\Lg,\Ln)$, where $\Ln$ is complete. Then we have $\dim \Lg^{(i)}\le \dim \Ln^{(i)}$ 
for all $i\ge 1$. In particular, if $\Ln$ is solvable, so is $\Lg$, and if $\Lg$ is perfect, so is $\Ln$.
\end{cor}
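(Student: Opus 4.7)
The plan is to reduce the statement directly to the preceding proposition via Corollary~\ref{2.15}. Since $\Ln$ is complete, that corollary guarantees that the given PA-structure on $(\Lg,\Ln)$ arises from a Rota--Baxter operator $R$ of weight $1$ on $\Ln$, so one can write $x\cdot y=\{R(x),y\}$. The preceding proposition then applies verbatim and yields $\dim \Lg^{(i)}\le \dim \Ln^{(i)}$ for all $i\ge 1$, which is the first assertion.

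From this dimension inequality both special cases follow by elementary numerical arguments. For the solvability claim, if $\Ln$ is solvable then $\Ln^{(k)}=0$ for some $k\ge 1$, and the inequality forces $\dim \Lg^{(k)}\le 0$, hence $\Lg^{(k)}=0$ and $\Lg$ is solvable. For the perfectness claim, $\Lg$ being perfect means $\Lg^{(i)}=\Lg$ for every $i\ge 1$; then
\[
\dim V=\dim \Lg=\dim \Lg^{(i)}\le \dim \Ln^{(i)}\le \dim \Ln=\dim V,
\]
which forces $\Ln^{(i)}=\Ln$ for every $i$, so in particular $\Ln^{(2)}=\Ln$ and $\Ln$ is perfect.

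I do not anticipate any real obstacle here, since all of the substantive work has already been carried out: Corollary~\ref{2.15} uses the completeness hypothesis to produce the Rota--Baxter operator $R$, and the preceding proposition uses that $R$ (and $R+\id$) restrict to Lie algebra homomorphisms between the $\Lg_i$ in order to propagate the dimension bound along the derived series. The present corollary is simply the convenient packaging of these two ingredients together with the trivial observation that a Lie algebra of derived length at most $k$ is characterized by the vanishing of $\Lg^{(k+1)}$, and a perfect Lie algebra by the stability of its derived series.
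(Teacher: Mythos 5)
Your proof is correct and follows exactly the paper's route: invoke Corollary~\ref{2.15} to realize the PA-structure via a Rota--Baxter operator of weight $1$, then apply the preceding proposition to get $\dim \Lg^{(i)}\le \dim \Ln^{(i)}$. The paper's proof is just that one sentence; you have merely spelled out the (correct and routine) deductions for the solvable and perfect special cases.
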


\begin{proof}
By Corollary $\ref{2.15}$ this follows from the proposition. 
\end{proof}

\begin{prop}\label{2.21}
Let $x\cdot y=\{R(x),y\}$ be a PA-structure on $(\Lg,\Ln)$ defined by an RB-operator $R$ of weight $1$ on $\Ln$.
Then the following holds.
\begin{itemize}
\item[$(1)$] If $\Lg$ and $\Ln$ are not isomorphic, then both $R$ and $R+\id$ have a non-trivial kernel.
\item[$(2)$] If either $\Lg$ or $\Ln$ is not solvable, then at least one of the operators $R$ and $R+\id$ has a 
non-trivial kernel.
\end{itemize}
\end{prop}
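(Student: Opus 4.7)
The plan is to invoke the fact, noted in the paragraph just before the Proposition, that both $R$ and $R+\id$ are Lie algebra homomorphisms from $\Lg=\Lg_1$ to $\Ln=\Lg_0$.

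Part $(1)$ is then almost immediate. If $\ker R=0$, then $R\colon\Lg\to\Ln$ is an injective Lie homomorphism between finite-dimensional vector spaces of equal dimension $\dim V$, hence a Lie isomorphism, contradicting $\Lg\not\cong\Ln$. The same argument applied to $R+\id$ shows $\ker(R+\id)\neq 0$.

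For part $(2)$ I argue the contrapositive: assume $\ker R = \ker(R+\id) = 0$ and deduce that $\Ln$ (and hence $\Lg$) is solvable. Under this assumption, part $(1)$ shows that $R$ and $R+\id$ are both Lie isomorphisms $\Lg\to\Ln$; in particular $\Lg\cong\Ln$, so it suffices to treat $\Ln$. The composition
\[
\Phi := (R+\id)\circ R^{-1}\colon \Ln \to \Ln
\]
is then a Lie automorphism of $\Ln$, and as a linear map on $V$ it equals $\id + R^{-1}$. Therefore $\Phi-\id = R^{-1}$ is invertible on $V$.

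To finish I reduce to the semisimple quotient. Since $\rad(\Ln)$ is a characteristic ideal it is $\Phi$-invariant, and $\Phi$ descends to a Lie automorphism $\overline{\Phi}$ of $\overline{\Ln}:=\Ln/\rad(\Ln)$. The invertible linear map $\Phi-\id$ restricts to a bijection on the invariant subspace $\rad(\Ln)$ (injective between equal-dimensional spaces) and therefore descends to a bijection $\overline{\Phi}-\id$ on $\overline{\Ln}$. If $\Ln$ were not solvable, then $\overline{\Ln}$ would be a nonzero semisimple Lie algebra, and Steinberg's fixed-point theorem --- every Lie automorphism of a nonzero semisimple Lie algebra in characteristic zero has a nonzero fixed subalgebra --- would contradict the invertibility of $\overline{\Phi}-\id$. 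This last input is the only nontrivial external ingredient and the main obstacle; everything else is dimension counting and formal manipulation of $R$ and $R+\id$ as homomorphisms.
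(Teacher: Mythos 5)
Your argument is correct and takes essentially the same route as the paper: part $(1)$ is the same dimension count, and part $(2)$ rests on the observation that the automorphism $(R+\id)\circ R^{-1}=\id+R^{-1}$ of $\Ln$ could have no nonzero fixed point since $R^{-1}$ is invertible. The only cosmetic difference is that the paper cites Jacobson's fixed-point theorem for automorphisms of non-solvable Lie algebras directly, which already packages the reduction modulo the radical to the semisimple case that you carry out by hand.
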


\begin{proof}
For $(1)$, assume that $\ker(R)=0$. Then $R\colon \Lg\ra \Ln$ is invertible, 
hence an isomorphism. This is a contradiction. The same is true for $R+\id$. For $(2)$ assume that $\ker(R)=\ker(R+\id)=0$.
Then $R$ and $R+\id$ are isomorphisms from $\Lg$ to $\Ln$, and $\Lg\cong \Ln$. Then we can apply a result of Jacobson \cite{JAC}
to the automorphism $\psi:=(R+\id)\circ R^{-1}$ of $\Ln$, because $\Ln$ is not solvable. We obtain a nonzero fixed point 
$x\in \Ln$, so that
\[
0 = \psi(x)-x =(R+\id)R^{-1}(x)-x=R^{-1}(x).
\]
Since $R$ is bijective, $x=0$, a contradiction.
\end{proof}

\begin{cor}\label{2.22}
Let $\Ln$ be a simple Lie algebra and $R$ be an invertible RB-operator of nonzero weight $\la$ on $\Ln$.
Then we have $R=-\la \id$.
\end{cor}

\begin{proof}
By rescaling we may assume that $R$ has weight $1$. We obtain a PA-structure on $(\Lg,\Ln)$ by Proposition $\ref{2.13}$, 
with Lie bracket \eqref{9} on $\Lg$. Since $\Ln$ is not solvable, either
$R$ or $R+\id$ have a nontrivial kernel. But $\ker(R)=0$ by assumption, so that $\ker(R+\id)$ is a nontrivial
ideal of $\Ln$. Hence we have $R+\id=0$.
\end{proof}

\section{PA-structures on pairs of semisimple Lie algebras}

We will assume that all algebras in this section are finite-dimensional. 
Let $x\cdot y$ be a PA-structure on $(\Lg,\Ln)$ over $\C$, where $\Lg$ is simple and $\Ln$ is semisimple. 
Then $\Ln$ is also simple, and both $\Lg$ and $\Ln$ are isomorphic, see Proposition $4.9$ in \cite{BU41}. 
We have a similar result for $\Ln$ simple and $\Lg$ semisimple. However, its proof is more difficult
than the first one.

\begin{thm}
Let $x\cdot y$ be a PA-structure on $(\Lg,\Ln)$ over $\C$, where $\Ln$ is simple and $\Lg$ is semisimple. 
Then $\Lg$ is also simple, and both $\Lg$ and $\Ln$ are isomorphic.
\end{thm}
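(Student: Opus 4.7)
The plan is as follows. Since $\Ln$ is simple, it is in particular complete, and Corollary~\ref{2.15} gives a bijection between PA-structures on $(\Lg,\Ln)$ and RB-operators of weight~$1$ on $\Ln$; write our structure as $x\cdot y=\{R(x),y\}$ with $\Lg$-bracket~\eqref{9}. A direct calculation from the Rota--Baxter identity (compare Proposition~7 of \cite{SEM} and the remark in the excerpt following the corollary to Proposition~\ref{2.13}) shows that both $R$ and $R+\id$ are Lie algebra homomorphisms $\Lg\to\Ln$. Set $I_1:=\ker R$ and $I_2:=\ker(R+\id)$; these are ideals of $\Lg$, and $I_1\cap I_2=0$ because $R$ and $R+\id$ differ by $\id$.

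If $I_1=0$ (respectively $I_2=0$), then $R$ (resp.\ $R+\id$) is an injective Lie homomorphism $\Lg\to\Ln$; since $\dim\Lg=\dim\Ln$ it is an isomorphism, so $\Lg\cong\Ln$ is simple and we are done. Hence assume $I_1,I_2\neq 0$. Because $\Lg$ is semisimple, we may decompose $\Lg=I_1\oplus I_2\oplus K$ into a direct sum of simple ideals. The images $\Ls_1:=R(\Lg)\cong I_2\oplus K$ and $\Ls_2:=(R+\id)(\Lg)\cong I_1\oplus K$ are then proper semisimple subalgebras of $\Ln$, and the identity $x=(R+\id)(x)-R(x)$ forces $\Ln=\Ls_1+\Ls_2$ as a vector-space sum. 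Moreover $R|_{I_2}=-\id$ and $(R+\id)|_{I_1}=\id$, so the subspaces $I_1,I_2\subseteq V$ are themselves Lie subalgebras of $\Ln$, with $I_2$ sitting as an ideal of $\Ls_1$ and $I_1$ as an ideal of $\Ls_2$, and with $I_1\cap I_2=0$ in $V$.

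It remains to rule out this second case. Here I would invoke Onishchik's classification \cite{ONI}, resting on Koszul's earlier results \cite{KOS}, of all decompositions of a simple complex Lie algebra as a vector-space sum of two proper reductive subalgebras: only finitely many triples $(\Ln,\Ls_1,\Ls_2)$ arise, up to conjugation, and the pairs $(\Ls_1,\Ls_2)$ on the list are known explicitly. For each entry one checks that the internal structure forced on us --- namely $\Ls_1\cong I_2\oplus K$ and $\Ls_2\cong I_1\oplus K$ with $I_1,I_2$ non-zero transverse semisimple ideals of $\Lg$ embedded into $V=\Ln$ as Lie subalgebras via $(R+\id)|_{I_1}=\id$ and $R|_{I_2}=-\id$ --- is incompatible with the actual pair $(\Ls_1,\Ls_2)$ on Onishchik's list, giving a contradiction. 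This case-by-case elimination, considerably more delicate than the corresponding argument for simple $\Lg$ in \cite{BU41} (which does not require Onishchik's list at all), is the main obstacle of the proof.
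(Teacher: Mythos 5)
Your setup matches the paper's: you pass to an RB-operator $R$ of weight $1$ via Corollary~\ref{2.15}, note that $R$ and $R+\id$ are homomorphisms $\Lg\to\Ln$ with transverse kernels, dispose of the case where a kernel vanishes, and in the remaining case arrive at the decomposition $\Ln=\Ls_1+\Ls_2$ with $\Ls_1=\im(R)\cong I_2\oplus K$ and $\Ls_2=\im(R+\id)\cong I_1\oplus K$. All of this is correct and is exactly how the paper begins.

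The gap is in the final step, which you only announce rather than prove: you propose to run through Onishchik's full classification of decompositions of a simple complex Lie algebra and check, entry by entry, that the forced internal structure of $(\Ls_1,\Ls_2)$ cannot occur. That verification is never carried out, and it is precisely the content of the theorem in this case; as stated, your argument rules out nothing. The paper closes this case without any case-by-case work, using two structural results. First, Theorem~4.2 of Onishchik \cite{ONI} says that in any decomposition of a simple complex Lie algebra as a sum of two semisimple subalgebras, at least one summand must be simple; since $I_1,I_2\neq 0$, both $\Ls_1\cong I_2\oplus K$ and $\Ls_2\cong I_1\oplus K$ would fail to be simple if $K\neq 0$, so $K=0$. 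Then $\dim\Ls_1+\dim\Ls_2=\dim I_2+\dim I_1=\dim\Lg=\dim\Ln$, so the sum $\Ln=\Ls_1\dot+\Ls_2$ is direct as vector spaces, and Koszul's theorem \cite{KOS} forces such a direct vector-space decomposition into semisimple subalgebras to be a direct sum of ideals, contradicting the simplicity of $\Ln$. You should replace your unexecuted classification check with these two citations (or actually perform the elimination, which would be far longer and is not needed).
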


\begin{proof}
By Corollary $\ref{2.15}$ we have $x\cdot y=\{R(x),y\}$ for an RB-operator $R$ of weight $1$ on $\Ln$. Assume
that $\Lg$ and $\Ln$ are not isomorphic. By Proposition $\ref{2.21}$ $(2)$ both $\ker(R)$ and
$\ker(R+\id)$ are proper nonzero ideals of $\Lg$, with $\ker(R)\cap \ker(R+\id)=0$. So we have
\[
\Lg=\ker(R)\oplus \ker(R+\id)\oplus \Ls
\]
with a semisimple ideal $\Ls$. We have $\Ln=\im(R)+\im(R+\id)$ because of
$x=R(-x)+(R+\id)(x)$ for all $x\in \Ln$, and
\begin{align*}
\im (R) & \cong \Lg/\ker(R)\cong \ker(R+\id)\oplus \Ls, \\
\im (R+\id) & \cong \Lg/\ker(R+\id)\cong \ker(R)\oplus \Ls.
\end{align*}
This yields a semisimple decomposition 
\[
\Ln=(\ker(R+\id)\oplus \Ls) + (\ker(R)\oplus \Ls).
\]
Suppose that $\Ls$ is nonzero. Then both summands are not simple. This is a contradiction to Theorem $4.2$ in 
Onishchik's paper \cite{ONI}, which says that at least one summand in a semisimple decomposition of a simple
Lie algebra must be simple. Hence we obtain 
$\Ls=0$, $\im(R)=\ker(R+\id)$, $\im(R+\id)=\im(R)$ and
\[
\Ln=\im(R)\dot + \im(R+\id).
\]
Then the main result of Koszul's note \cite{KOS} implies that $\Ln=\im(R)\oplus \im(R+\id)$, which is a 
contradiction to the simplicity of $\Ln$. Hence $\Lg$ and $\Ln$ are isomorphic.
\end{proof}

If $\Lg$ is semisimple with only two simple summands, we can prove the same result for any field $K$ of characteristic
zero. 

\begin{prop}
Let $x\cdot y$ be a PA-structure on $(\Lg,\Ln)$, where $\Ln$ is semisimple, and $\Lg=\Ls_1\oplus\Ls_2$ 
is the direct sum of two simple ideals of $\Lg$. Then $\Lg$ and $\Ln$ are isomorphic.
\end{prop}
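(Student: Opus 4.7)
The plan is to adapt the argument of Theorem~3.1 while exploiting the especially restricted ideal lattice of $\Lg=\Ls_1\oplus\Ls_2$. Since $\Ln$ is semisimple and hence complete, Corollary~\ref{2.15} furnishes a weight-$1$ RB-operator $R$ on $\Ln$ with $x\cdot y=\{R(x),y\}$, and Corollary~$2.20$ (applied with $i=1$) yields the basic inequality $\dim\Lg\le\dim\Ln$. Recall also, from the discussion following Corollary~$2.18$, that both $R$ and $R+\id$ are Lie algebra homomorphisms $\Lg\to\Ln$, so $\ker(R)$ and $\ker(R+\id)$ are ideals of $\Lg$ intersecting trivially.

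Suppose for contradiction that $\Lg\not\cong\Ln$. By Proposition~\ref{2.21}(1), both $\ker(R)$ and $\ker(R+\id)$ are nonzero. The degenerate possibilities $R=0$ and $R=-\id$ are easy to dispose of first: the defining formula \eqref{9} for the bracket of $\Lg$ collapses to $[x,y]=\pm\{x,y\}$, forcing $\Lg\cong\Ln$ and contradicting the assumption. Hence both kernels are \emph{proper} nonzero ideals of $\Lg=\Ls_1\oplus\Ls_2$. The only such ideals are $\Ls_1$ and $\Ls_2$, and since the two kernels have trivial intersection, after relabeling $\Ls_1$, $\Ls_2$ we may assume $\ker(R)=\Ls_1$ and $\ker(R+\id)=\Ls_2$; in particular $\Lg=\ker(R)\oplus\ker(R+\id)$.

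A dimension count then gives $\dim\im(R)+\dim\im(R+\id)=\dim\Lg$. Combining this with $\Ln=\im(R)+\im(R+\id)$ (from the identity $x=(R+\id)(x)-R(x)$) and with $\dim\Lg\le\dim\Ln$ forces equality throughout, so $\im(R)\cap\im(R+\id)=0$ and $\Ln=\im(R)\,\dot+\,\im(R+\id)$ is a direct vector-space sum of subalgebras. Applying Koszul's theorem \cite{KOS} exactly as in the proof of Theorem~3.1 promotes this to a direct sum of ideals $\Ln=\im(R)\oplus\im(R+\id)\cong\Ls_2\oplus\Ls_1\cong\Lg$, contradicting $\Lg\not\cong\Ln$. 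The main delicate step is the kernel analysis---in particular, ruling out the trivial operators $R=0$ and $R=-\id$ so as to identify $\ker(R)$ and $\ker(R+\id)$ as the two simple summands of $\Lg$---after which the conclusion follows from a short dimension count and a direct appeal to Koszul's factorization theorem.
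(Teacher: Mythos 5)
Your proposal is correct and follows essentially the same route as the paper, which proves this proposition by rerunning the argument of Theorem~3.1 and noting that the appeal to Onishchik's theorem becomes unnecessary; you make that explicit by observing that the only proper nonzero ideals of $\Lg=\Ls_1\oplus\Ls_2$ are $\Ls_1$ and $\Ls_2$, so the kernels of $R$ and $R+\id$ must be these two summands, after which the decomposition $\Ln=\im(R)\,\dot+\,\im(R+\id)$ and Koszul's theorem give $\Ln\cong\Ls_2\oplus\Ls_1\cong\Lg$. The auxiliary steps (ruling out $R=0$, $R=-\id$, and the dimension count) are fine, though the inequality $\dim\Lg\le\dim\Ln$ is automatic since both brackets live on the same vector space $V$.
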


The proof is the same as before. The only argument where we needed the complex numbers, was the result
of \cite{ONI}, which we do not need here. \\[0.2cm]
Let $\Ln=\Ls_1\oplus \Ls_2$ be a direct sum of two simple isomorphic ideals $\Ls_1$ and $\Ls_2$. 
We would like to find all RB-operators of weight $1$ on $\Ln$ such that $\Lg$ with 
bracket \eqref{9} is isomorphic to $\Ln$.

\begin{prop}
All PA-structures on $(\Lg,\Ln)$ with $\Lg\cong\Ln=\Ls_1\oplus \Ls_2$, where $\Ls_1$ and $\Ls_2$ simple isomorphic ideals 
of $\Ln$, arise by the trivial RB-operators or by one of the following RB-operators $R$ on $\Ln$, and
$\psi\in\Aut(\Ln)$,
\begin{align*}
R((s_1,s_2)) & = (-s_1,-\psi(s_1)), \\
R((s_1,s_2)) & = (0,\psi(s_1)), \\
R((s_1,s_2)) & = (-s_1,0)), 
\end{align*}
up to permuting the factors and application of $\phi(R)=-R-\id$ to these operators.
\end{prop}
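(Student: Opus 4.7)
My plan is to apply Corollary~\ref{2.15} to translate the classification into one about Rota--Baxter operators of weight $1$ on $\Ln=\Ls_1\oplus\Ls_2$, and then to perform a case analysis on the pair of kernels $K_1:=\ker R$ and $K_2:=\ker(R+\id)$. Both $R$ and $R+\id$ are Lie algebra homomorphisms $\Lg\to\Ln$, so $K_1,K_2$ are ideals of $\Lg$, and Proposition~\ref{2.21}\,(2) rules out $K_1=K_2=0$ since $\Ln$ is not solvable. The extremal cases $K_1=\Lg$ and $K_2=\Lg$ correspond to the trivial RB-operators $R=0$ and $R=-\id$. After applying the involution $\phi\colon R\mapsto -R-\id$ (which swaps $K_1$ and $K_2$) if needed, I may assume $K_1\ne 0$.

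Since $\Lg\cong\Ln$ is a direct sum of two isomorphic simple ideals, its only ideals are $0$, the two simple summands, and $\Lg$ itself, so $K_1$ is necessarily a simple ideal of $\Lg$ of dimension $d:=\dim\Ls_1$. Regarded as a subalgebra of $\Ln$, a short Goursat-type argument (the projections of $K_1$ into $\Ls_1$ and $\Ls_2$ are Lie homomorphisms from a simple algebra, hence zero or injective) shows that $K_1$ must coincide, as a subspace of $V$, with $\Ls_1$, $\Ls_2$, or a twisted diagonal $\Delta_\psi:=\{(s,\psi(s)):s\in\Ls_1\}$ for some Lie algebra isomorphism $\psi\colon\Ls_1\to\Ls_2$. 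After permuting the two factors, I may assume $K_1\in\{\Ls_2,\Delta_\psi\}$.

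In the principal case $K_1=\Ls_2$, the vanishing $R|_{\Ls_2}=0$ allows me to write $R(s_1,s_2)=(f(s_1),g(s_1))$ for linear maps $f\colon\Ls_1\to\Ls_1$ and $g\colon\Ls_1\to\Ls_2$. Plugging this ansatz into the Rota--Baxter identity and comparing the $\Ls_1$- and $\Ls_2$-components shows that $f$ is itself an RB-operator of weight $1$ on the simple Lie algebra $\Ls_1$. Corollary~\ref{2.22}, combined with the fact that $\ker f$ must be an ideal of $\Ls_1$, then forces $f\in\{0,-\id\}$. Working out the residual conditions on $g$, subdivided by whether $K_2$ equals $0$, $\Ls_1$, or $\Delta_\psi$ (the subcase $K_2=\Ls_2$ being excluded by $K_1\cap K_2=0$), reproduces respectively operators (b), (c), and (a) of the statement.

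The remaining case $K_1=\Delta_\psi$ is handled by applying $\phi$ once more: the resulting operator has kernel equal to the old $K_2$, and if $K_2\in\{\Ls_1,\Ls_2\}$ this reduces to the principal case. I expect the main obstacle to be the subcase where $K_2$ is also a twisted diagonal, arising from split RB-operators via Proposition~\ref{2.7}; this will be identified with $\phi$ applied to operator (a), whose $\ker(R_{(a)}+\id)=\Delta_\psi$ is itself a twisted diagonal, together with a corresponding reparametrization of $\psi$. Assembling all subcases then yields the listed classification.
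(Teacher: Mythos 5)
Your overall architecture is essentially the paper's (reduce to RB-operators via Corollary~\ref{2.15}, analyse the pair of kernels, use Corollary~\ref{2.22} to close the invertible cases), with a Goursat-type projection argument replacing the paper's appeal to Koszul; but as written there are three genuine holes. First, the claim that $\ker f$ must be an ideal of $\Ls_1$ is false: the kernel of an RB-operator is only a subalgebra of the underlying algebra (for the split operator attached to a decomposition of $\Ls\Ll_2(\C)$ into a Borel subalgebra and the opposite nilradical, the kernel is the Borel, not an ideal). The conclusion $f\in\{0,-\id\}$ is still true, but it has to come from the ideal structure of $\Lg$, not of $\Ls_1$: since $K_2\cong\ker(f+\id)$ is an ideal of $\Lg\cong\Ls\oplus\Ls$, its dimension is $0$ or $d$; in the latter case $R$ is split and $f=-\id$, and in the former $-f-\id$ is an \emph{invertible} RB-operator of weight $1$ on the simple algebra $\Ls_1$, so Corollary~\ref{2.22} gives $f=0$.

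Second, in the case $K_1=\Delta_\psi$ you treat only $K_2\in\{\Ls_1,\Ls_2\}$ and $K_2$ a diagonal; the subcase $K_2=0$ is missing, and it is the one that actually costs effort, because neither $\phi$ nor a permutation of factors turns the pair $(\Delta_\psi,0)$ into a pair containing a factor, and no operator on the list has these kernels. One must show this subcase cannot occur: Corollary~\ref{2.22} applied to the induced operator on the simple quotient $\Lg/\ker(R)$ forces $R^2=0$, hence $\im(R)=\ker(R)=\Delta_\psi$ by dimension count, and writing $R$ out explicitly on $\Ls_1\oplus\Ls_2$ one extracts an invertible RB-operator of weight $-1$ on $\Ls_1$, which by Corollary~\ref{2.22} must be the identity and then yields $\{b,b'\}=3\{b,b'\}$, a contradiction. (The paper silently assumes here that a $d$-dimensional kernel is one of the factors; your Goursat step correctly exposes the diagonal possibility, so you must also dispose of it.) Third, the subcase you flag as the ``main obstacle'' --- both kernels twisted diagonals --- is in fact vacuous: for any two isomorphisms $\psi,\chi\colon\Ls_1\to\Ls_2$ the automorphism $\psi^{-1}\chi$ of $\Ls_1$ has a nonzero fixed point by Jacobson's theorem, so $\Delta_\psi\cap\Delta_\chi\neq 0$, contradicting $\ker(R)\cap\ker(R+\id)=0$. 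In particular your proposed identification of that subcase with $\phi$ applied to operator (a) is wrong: that operator has $\ker=\Delta_\psi$ but $\ker(\,\cdot\,+\id)=\Ls_2$, a factor, so it never produces two diagonal kernels.
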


\begin{proof}
By Proposition $\ref{2.5}$ and Proposition $\ref{2.7}$ the given operators are RB-operators of weight $1$
on $\Ln$, because $R$ is. By Proposition $\ref{2.21}$ at least one of  $\ker(R)$ and $\ker(R+\id)$ is nonzero.
Suppose first that both $\ker(R)$ and $\ker(R+\id)$ are zero. 
Then we have  $\Lg=\ker(R)\oplus \ker (R+\id)$ 
and $\Ln=\ker(R)\dot + \ker(R+\id)$. It is easy to see that $\ker(R)$ coincides with $\Ls_1$ or $\Ls_2$ by using the 
Theorem of Koszul \cite{KOS}. Applying $\phi$ if necessary, we can assume that $\ker(R)=\Ls_2$. 
Then again by Koszul's result we have $R((s_1,s_2))=(\psi_1(s_1),\psi_2(s_1))$ or $R((s_1,s_2))=(\psi_1(s_1),0))$ for some
$\psi_1,\psi_2\in \Aut(\Ln)$. Since $\im(R)=\ker(R+\id)$ we either have $R((s_1,s_2))=(-s_1,-\psi(s_1))$ or
$R((s_1,s_2))  = (-s_1,0)$. \\
In the second case, one of the kernels is zero. Applying $\phi$ if necessary, we may assume that $\ker(R+\id)=0$
and $\ker(R)=\Ls_1$.
Then $\Lg/\ker(R)$ is a simple Lie algebra, and $-R-\id$ is an invertible RB-operator of weight $1$ on $\Lg/\ker(R)$.
By Corollary $\ref{2.22}$ we obtain $-R-\id=-\id$, hence $R=0$ on $\Lg/\ker(R)$. This implies $R^2=0$ on $\Lg$.
The projections of $\im(R)$ to $\Ls_1$ and $\Ls_2$ are either zero or an isomorphism on one factor. So we have
$R((s,0))=(0,\psi(s))$ or $R((s,0))=(\psi_1(s),\psi_2(s))$ for some automorphisms $\psi,\psi_1,\psi_2$. 
But the second operator does not satisfy $R^2=0$, and hence is impossible. Therefore we are done.
\end{proof}

\begin{prop}\label{3.4}
Let $x\cdot y=\{R(x),y\}$ be a PA-structure on $(\Lg,\Ln)$ defined by an RB-operator $R$ of weight $1$ on $\Ln$.
Let $\Ln_1=\ker (R^n)$, $\Ln_2=\ker(R+\id)^n$, $\Ln_3=\im(R^n)\cap \im((R+\id)^n)$ for $n=\dim (V)$. Then
$\Ln=\Ln_1\dot + \Ln_2 \dot + \Ln_3$ with $\{\Ln_1,\Ln_3\} \subseteq \Ln_1$, $\{\Ln_2,\Ln_3\}\subseteq \Ln_2$, and 
$\Ln_3$ is solvable. 
\end{prop}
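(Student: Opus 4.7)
The plan is to combine a Fitting-type decomposition of $V$ under the commuting pair $R$, $R+\id$ with an induction on $k$ propagating the RB-identity through the generalized kernels $\ker R^k$. Since $R$ commutes with $R+\id$, the standard Fitting decomposition $V=\ker R^n\oplus \im R^n$ is $(R+\id)$-stable; on $\Ln_1=\ker R^n$ the operator $R$ is nilpotent, so $R+\id$ is invertible there, giving $\Ln_1\cap \Ln_2=0$ and $\Ln_1\subseteq \im(R+\id)^n$, and symmetrically $\Ln_2\subseteq \im R^n$. Applying the Fitting decomposition of $R+\id$ restricted to $\im R^n$ produces $\im R^n=\Ln_2\oplus \Ln_3$, and hence $V=\Ln_1\dot +\Ln_2\dot +\Ln_3$.

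Next, I would prove by induction on $k\ge 1$ the statement $P(k):\{\ker R^k,\im(R+\id)^k\}\subseteq \ker R^k$. The base case $k=1$ comes from substituting $R(x)=0$ into the RB-identity, which collapses to $R(\{x,(R+\id)(z)\})=0$. For the inductive step, rearranging the RB-identity gives
\[
R(\{x,(R+\id)(w)\})=\{R(x),R(w)\}-R(\{R(x),w\}),
\]
and applying $R^k$ to both sides and invoking $P(k)$ with $R(x)\in\ker R^k$ together with $w,R(w)\in \im(R+\id)^k$ annihilates both terms, so that $R^{k+1}(\{x,y\})=0$. Setting $k=n$ yields $\{\Ln_1,\Ln_3\}\subseteq \Ln_1$, and the symmetric inclusion $\{\Ln_2,\Ln_3\}\subseteq \Ln_2$ follows by applying the same argument to the conjugate RB-operator $-R-\id$ of Lemma $2.2$. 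I expect this induction to be the main obstacle: one must pair $\ker R^k$ against $\im(R+\id)^k$ with matching exponents in order for the rearrangement identity above to close the induction cleanly.

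For solvability of $\Ln_3$, I would first show that each $\im R^k$ is a subalgebra of $\Ln$ by an induction on $k$: given $R^{k+1}(u),R^{k+1}(v)\in \im R^{k+1}$, the RB-identity rewrites their bracket as $R$ applied to a sum of brackets of elements of $\im R^k$, which by the inductive hypothesis lies in $\im R^k$, so the whole bracket lies in $R(\im R^k)=\im R^{k+1}$. The same argument applied to $-R-\id$ shows $\im(R+\id)^k$ is a subalgebra, so $\Ln_3=\im R^n\cap \im(R+\id)^n$ is a subalgebra preserved by both $R$ and $R+\id$. The restriction $R|_{\Ln_3}$ is then an RB-operator of weight $1$ on $\Ln_3$, and both $R|_{\Ln_3}$ and $(R+\id)|_{\Ln_3}$ are invertible by construction; Proposition~$\ref{2.21}\,(2)$ applied to the associated pair $(\Lg_3,\Ln_3)$ then forces $\Ln_3$ to be solvable.
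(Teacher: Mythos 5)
Your proof is correct, and its skeleton --- a Fitting-type decomposition, an induction on the exponent for the bracket inclusions, and Proposition~\ref{2.21}\,(2) for the solvability of $\Ln_3$ --- coincides with the paper's. The genuine difference lies in how the key inclusion $\{\ker(R^k),\im((R+\id)^k)\}\subseteq\ker(R^k)$ is proved. The paper gets the base case from the axiom \eqref{post1} together with the fact that $\ker(R)$ is an ideal of $\Lg$, and then runs the induction through the tower of brackets $[\,,]_i$, expanding $\{x,y\}$ into iterated brackets and counting a ``degree'' until every summand dies; that argument is only sketched there. You instead stay entirely inside the Rota--Baxter identity: the base case is the observation that $R(x)=0$ collapses the identity to $R(\{x,(R+\id)(z)\})=0$, and the inductive step applies $R^k$ to the rearrangement $R(\{x,(R+\id)(w)\})=\{R(x),R(w)\}-R(\{R(x),w\})$, using that $R(x)\in\ker(R^k)$ and that $\im((R+\id)^k)$ is $R$-stable because the two operators commute. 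This closes cleanly with matching exponents, is tighter and more self-contained than the paper's version, and yields as a byproduct that $\Ln_1$ and $\Ln_2$ are subalgebras of $\Ln$ (via $\Ln_1\subseteq\im((R+\id)^n)$ and $P(n)$) --- a fact you should state explicitly, since the notation $\Ln=\Ln_1\dot +\Ln_2\dot +\Ln_3$ presupposes it. Your derivation of the decomposition by iterated Fitting decompositions of the commuting pair replaces the paper's binomial computation showing $\ker((R+\id)^n)\subseteq\im(R^n)$, with the same content; likewise your direct RB-identity induction for ``$\im(R^k)$ is a subalgebra'' replaces the paper's appeal to the general fact that RB-operators map subalgebras to subalgebras. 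The solvability argument is identical to the paper's.
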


\begin{proof}
We first show by induction that $\ker(R^i)$ is a subalgebra of $\Ln$, and that 
\[
\{\ker(R^i),\im((R+\id)^i)\}\subseteq \ker(R^i)
\]
for all $i\ge 1$. The case $i=1$ goes as follows. We already know that $\ker(R)$ is a subalgebra of $\Ln$. 
So we have to show that $\{\ker(R),\im (R+\id)\}\subseteq \ker(R)$. Let $x\in \ker(R)$ and $y\in \Ln$. 
Then by \eqref{post1} we have
\begin{align*}
\{x,(R+\id)(y)\} & = \{x,R(y)\}+\{x,y\} \\
                 & = [x,y]+\{y,R(x)\} \\
                 & = [x,y], 
\end{align*}
which is in $\ker(R)$, since this is an ideal in $\Lg$. 
For the induction step $i\mapsto i+1$ consider the iteration of the Lie bracket \eqref{9}
for all $i\ge 0$, given by
\[
[x,y]_i = [x,y]_{i+1}-[R(x),y]_i-[x,R(y)]_i
\]
for all $i\ge 0$. Then 
\begin{align*}
\{x,y\} & = [x,y]_1-[R(x),y]_0-[x,R(y)]_0 \\
        & = [x,y]_2-[R^2(x),y]_0-2[R(x),y]_0-2[R(x),R(y)]_0-2[x,R(y)]_0-[x,R^2(y)]_0
\end{align*}
and so on. Define a degree of a term $[R^l(x),R^k(y)]_m$ by $l+k+m$, and let $x,y\in \ker(R^{i+1})$.
We can iterate the brackets, until the degree of every summand on the right-hand side will be greater than $3i$, so that
all summands either have a term $R^l(x)$ with $l>i$, or a term $R^k(y)$ with $k>i$, or all summands
lie in $[\ker(R^{i+1}),\ker(R^{i+1})]_{i+1}$. By induction hypothesis, such terms will vanish for $l>i$ or $k>i$,
and since $\ker(R^{i+1})$ is an ideal in $\Lg_{i+1}$, we have $\{x,y\}\in \ker(R^{i+1})$, so that $\ker(R^{i+1})$ 
is a subalgebra of $\Ln$. The induction step for the second claim follows similarly. 

Since the image of a subalgebra under the action of an RB-operator is a subalgebra, $\Ln_1$, $\Ln_2$ and their
intersection $\Ln_3$ are subalgebras of $\Ln$. We want to show that $\Ln=\Ln_1\dot + \Ln_2 \dot + \Ln_3$.
Because of $\ker(R^n)\cap \im(R^n)=0$ we have $\Ln=\ker(R^n)\dot +\im(R^n)$. In the same way we have
 $\Ln=\ker((R+\id)^n)\dot +\im((R+\id)^n)$. We obtain
\[
\im(R^n)\cap \ker((R+\id)^n)\dot + \im(R^n)\cap \im((R+\id)^n)\subseteq \im (R^n).
\]
We claim that $\ker((R+\id)^n)\subseteq \im(R^n)$, so that we have equality above. Indeed, for $x\in\ker((R+\id)^n)$ 
we have by the binomial formula
\[
x+\binom{n}{n-1}R(x)+\cdots +\binom{n}{1}R^{n-1}(x)=-R^n(x)\in \im(R^n).
\]
Applying $R^{n-1}$ we obtain $R^{n-1}(x)\in \im(R^n)$ and
\[
x+nR(x)+\cdots +\binom{n}{2}R^{n-2}(x)\in \im(R^n).
\]
Iterating this we obtain $x\in \im(R^n)$. This yields
\begin{align*}
\Ln & = \ker(R^n)\dot +\im(R^n) \\
    & = \ker(R^n)\dot +\ker((R+\id)^n)\dot +\im(R^n)\cap \im((R+\id)^n)\\
    & = \Ln_1\dot +\Ln_2\dot +\Ln_3.
\end{align*}
On $\Ln_3$ both operators $R$ and $R+\id$ are invertible. By Proposition $\ref{2.21}$ part $(2)$ it follows 
that $\Ln_3$ is solvable.
\end{proof}

\begin{cor}\label{3.5}
The decomposition $\Ln=\Ln_1\dot + \Ln_2 \dot + \Ln_3$ induces a decomposition  $\Lg_i=\Ln_1\dot + \Ln_2 \dot + \Ln_3$
for each $i\ge 1$ with the same properties as in the Proposition. 
The Lie algebras $(\Ln_j,[,]_i)$ and $(\Ln_j,[,]_0)$ are isomorphic for $j=1,2,3$.
\end{cor}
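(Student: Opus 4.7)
The plan is to reduce the corollary to two features: (i) both $R$ and $R+\id$ preserve each of the subspaces $\Ln_1$, $\Ln_2$, $\Ln_3$, and (ii) both are Lie algebra homomorphisms $\Lg_{i+1}\to\Lg_i$ (cf.\ Proposition~$7$ of \cite{SEM}). Fact~(i) is immediate from the observation that $R$ and $R+\id$ commute with each other, so each preserves every kernel and every image of a power of either; in particular both preserve $\Ln_1=\ker(R^n)$, $\Ln_2=\ker((R+\id)^n)$, and $\Ln_3=\im(R^n)\cap\im((R+\id)^n)$. The vector-space identity $\Lg_i=\Ln_1\dot+\Ln_2\dot+\Ln_3$ is then automatic, since the $\Ln_j$ are subspaces of the underlying vector space $V$ of every $\Lg_i$ and the decomposition holds at level $0$ by Proposition~$\ref{3.4}$.

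To establish that each $\Ln_j$ is a subalgebra of $\Lg_i$ and that $[\Ln_1,\Ln_3]_i\subseteq\Ln_1$, $[\Ln_2,\Ln_3]_i\subseteq\Ln_2$, I would induct on $i$ via the recursion
\[
[x,y]_{i+1}=[R(x),y]_i-[R(y),x]_i+[x,y]_i,
\]
with base case $i=0$ given by Proposition~$\ref{3.4}$. If $x\in\Ln_a$ and $y\in\Ln_b$ with $[\Ln_a,\Ln_b]_i\subseteq\Ln_c$ already known, then $R(x)\in\Ln_a$ and $R(y)\in\Ln_b$ by (i), so all three terms on the right lie in $\Ln_c$ by the induction hypothesis, and hence so does $[x,y]_{i+1}$. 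Applying this to the pairs $(\Ln_j,\Ln_j)$, $(\Ln_1,\Ln_3)$, and $(\Ln_2,\Ln_3)$ yields the required containments at level $i+1$.

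For the isomorphism statement, restrict the Lie algebra homomorphism $R\colon\Lg_{i+1}\to\Lg_i$ (respectively $R+\id$) to the subalgebra $\Ln_j$, obtaining a Lie algebra homomorphism $(\Ln_j,[,]_{i+1})\to(\Ln_j,[,]_i)$. By Proposition~$\ref{3.4}$, $R$ is bijective on $\Ln_2$ and on $\Ln_3$, while $R+\id$ is bijective on $\Ln_1$ and on $\Ln_3$; in each case we get a Lie algebra isomorphism at one step, and iterating yields $(\Ln_j,[,]_i)\cong(\Ln_j,[,]_0)$ for $j=1,2,3$. Solvability of $(\Ln_3,[,]_i)$ is then inherited from that of $(\Ln_3,[,]_0)$.

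The main obstacle is verifying that $R$ preserves the intersection $\Ln_3$; once the commutation of $R$ and $R+\id$ is noted, both operators preserve $\im(R^n)$ and $\im((R+\id)^n)$, hence also their intersection. Alternatively one can describe $\Ln_3$ as the sum of the generalized eigenspaces of $R$ for eigenvalues other than $0$ and $-1$, which makes both the preservation by $R$ and the invertibility of $R$ and $R+\id$ on $\Ln_3$ completely transparent. After this observation the rest is a clean induction.
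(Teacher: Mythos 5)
Your proof is correct and follows essentially the same route as the paper: the paper likewise uses that $R$ and $R+\id$ are RB-operators (hence Lie algebra homomorphisms) on every $\Lg_i$, concludes that the same three subspaces give the decomposition at each level, and obtains the isomorphisms from the chain $(\Ln_j,[,]_n)\to\cdots\to(\Ln_j,[,]_0)$ using that $R+\id$ is invertible on $\Ln_1$, $R$ on $\Ln_2$, and both on $\Ln_3$. The only cosmetic difference is that you verify the subalgebra and module containments at level $i$ by a direct induction on the recursion $[x,y]_{i+1}=[R(x),y]_i-[R(y),x]_i+[x,y]_i$, whereas the paper simply re-invokes Proposition~$\ref{3.4}$ for each pair $(\Lg_{i+1},\Lg_i)$; both are valid.
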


\begin{proof}
Since $R$ and $R+\id$ are RB-operators on all $\Lg_i$, we obtain the same decomposition with the same subalgebras.
Note that $R+\id$ is invertible on $\Ln_1$, $R$ is invertible on $\Ln_2$ and both are invertible on $\Ln_3$.
In order to show that $(\Ln_1,[,]_i$ is isomorphic to $(\Ln_1,[,]_0$, we consider
a chain of isomorphisms 
\[
(\Ln_1,[,]_n) \xrightarrow{R+\id} (\Ln_1,[,]_{n-1}) \xrightarrow{R+\id}  \cdots  \xrightarrow{R+\id} (\Ln_1,[,]_0).
\]
In a similar way we can deal with $\Ln_2$ and $\Ln_3$.
\end{proof}

\noindent
{\em Note that Proposition $3.6$ is not correct. Hence the proof of Proposition $3.7$ and $3.8$ 
is invalid. However, the statement of both results is true and we have given a new proof of it in 
our paper \cite{BU64} on decompositions of algebras and post-associative algebra structures.}

\begin{prop}\label{3.6}
Let $\Lg=\Ls_1+\Ls_2$ be the vector space sum of two complex semisimple subalgebras of $\Lg$. Then $\Lg$ is semisimple.
\end{prop}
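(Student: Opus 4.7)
My plan is to proceed by contradiction: assume $\Lr := \rad(\Lg) \neq 0$ and try to derive a contradiction by passing to the semisimple quotient $\ov{\Lg} := \Lg/\Lr$.

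First I would observe that $\Ls_i \cap \Lr = 0$ for $i=1,2$. Indeed, for $s \in \Ls_i$ and $x \in \Ls_i \cap \Lr$ one has $[s,x] \in \Ls_i \cap \Lr$ because $\Lr$ is an ideal of $\Lg$, so $\Ls_i \cap \Lr$ is an ideal of $\Ls_i$, and being solvable it must vanish by semisimplicity of $\Ls_i$. The projection $\pi : \Lg \to \ov{\Lg}$ is therefore injective on each $\Ls_i$, with $\pi(\Ls_1) + \pi(\Ls_2) = \ov{\Lg}$. Combining $\dim \Lg = \dim \Ls_1 + \dim \Ls_2 - \dim(\Ls_1 \cap \Ls_2)$ with its analogue in $\ov{\Lg}$ yields
\[
\dim \Lr \;=\; \dim\bigl(\pi(\Ls_1) \cap \pi(\Ls_2)\bigr) - \dim(\Ls_1 \cap \Ls_2),
\]
so the hypothesis $\Lr \neq 0$ is equivalent to the intersection growing strictly modulo $\Lr$, and the task is to rule this out.

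To attempt that I would introduce $\Lh := \Ls_1 \cap (\Ls_2 + \Lr)$, a subalgebra of $\Ls_1$. Since $\Ls_2 \cap \Lr = 0$, each $s \in \Lh$ decomposes uniquely as $s = \tau(s) + r$ with $\tau(s) \in \Ls_2$ and $r \in \Lr$, and the inclusions $[\Ls_2, \Lr] + [\Lr,\Lr] \subseteq \Lr$ force $\tau : \Lh \to \Ls_2$ to be an injective homomorphism of Lie algebras whose image parameterises exactly the extra intersection in $\ov{\Lg}$. The desired contradiction would follow if one could prove that $\tau(s) = s$ for all $s \in \Lh$, forcing $\Lh \subseteq \Ls_2$ and hence $\Lh = \Ls_1 \cap \Ls_2$.

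The hard part will be exactly this rigidity statement. A natural attempt is the Malcev--Harish-Chandra theorem: $\Ls_1$ and $\Ls_2$ each embed, after an inner automorphism from $\exp(\ad \nil(\Lg))$, into a fixed Levi subalgebra of $\Lg$, and one would hope to use this to straighten $\Ls_2$ onto $\Ls_1$. The obstruction is that Malcev gives \emph{separate} inner automorphisms for $\Ls_1$ and $\Ls_2$, not a single one aligning them simultaneously inside $\Lg = \Ls_1 + \Ls_2$. Semisimple subalgebras really can be twisted by elements of $\exp(\ad \Lr)$, and without an additional structural ingredient -- for instance an Onishchik-type decomposition theorem for $\ov{\Lg}$, or a constraint on how $\Ls_1$ and $\Ls_2$ act on $\Lr$ -- I do not see how to rule such a twist out. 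I would expect this rigidity step to be the crux of the argument.
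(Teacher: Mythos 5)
Your closing instinct is exactly right, and you should trust it: the rigidity step you isolate cannot be carried out, because the statement itself is false. The twist by $\exp(\ad \Lr)$ that you flag as the obstruction is not a technical nuisance to be ruled out --- it produces genuine counterexamples. Take $\Lg=V\rtimes\Ls$ with $\Ls=\Ls\Ll_2(\C)$ and $V=\C^2$ its natural irreducible module, regarded as an abelian ideal, so $\rad(\Lg)=V\neq 0$. For $0\neq a\in V$ the operator $\ad a$ is a nilpotent derivation with $(\ad a)^2=0$, hence $\exp(\ad a)=\id+\ad a$ is an automorphism of $\Lg$, and $\Ls_1:=\Ls$ and $\Ls_2:=\exp(\ad a)(\Ls)=\{s+[a,s]:s\in\Ls\}$ are two semisimple subalgebras. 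Their sum contains $\Ls$ together with all differences $s-(s+[a,s])=-[a,s]$; since the annihilator of a nonzero vector of $V$ in $\Ls\Ll_2(\C)$ is one-dimensional, the image $[a,\Ls]$ is two-dimensional, i.e.\ all of $V$, so $\Lg=\Ls_1+\Ls_2$ although $\Lg$ is not semisimple. In your notation this is precisely a configuration where $\Lh=\Ls_1\cap(\Ls_2+\Lr)$ is strictly larger than $\Ls_1\cap\Ls_2$ and $\tau$ is a nontrivial twist; a Malcev conjugation straightening $\Ls_2$ onto the Levi subalgebra necessarily moves $\Ls_1$ as well, which is why no single inner automorphism aligns them.

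The paper itself concedes this: the italicized remark immediately preceding Proposition 3.6 states that the proposition is not correct, that the proofs of Propositions 3.7 and 3.8 depending on it are therefore invalid, and that the statements of those two results are reproved by other means in \cite{BU64}. For completeness, the printed proof of Proposition 3.6 breaks down at the reduction step: after quotienting by a minimal abelian ideal and invoking minimality of the counterexample, one may only conclude that $\rad(\Lg)$ is a minimal abelian ideal, hence an irreducible module for the semisimple quotient; the proof silently assumes this module is trivial, so that $\Lg$ is reductive with one-dimensional centre and Onishchik's Theorem 3.2 applies. The example above lives exactly in the excluded case of a nontrivial irreducible module. So the correct conclusion of your analysis is not that a cleverer rigidity argument is needed, but that none exists.
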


\begin{proof}
Suppose that the claim is not true and let $\Lg$ be a counterexample of minimal dimension. Then $\Lg$ contains
a nonzero abelian ideal $\La$. Then we obtain
\[
\Lg/\La = \Ls_1/(\Ls_1\cap\La) + \Ls_2/(\Ls_2\cap \La).
\]
Since $\Ls_1\cap \La$ is an abelian ideal $\Ls_1$, it must be zero, i.e., $\Ls_1\cap \La=0$. In the same way
we have $\Ls_2\cap \La=0$. Hence we obtain a semisimple decomposition of  $\Lg/\La$ with $\dim(\Lg/\La)<\dim(\Lg)$.
If $\Lg/\La$ is semisimple, this is a contradiction to the minimality of the counterexample $\Lg$. Otherwise
we may assume that $\Lg$ has $1$-dimensional solvable radical. Then $\Lg$ is reductive, and by Theorem $3.2$
of \cite{ONI}, there are no semisimple decompositions of a complex reductive non-semisimple Lie algebra. Hence we are
done.
\end{proof}

\begin{prop}
Let $x\cdot y=\{R(x),y\}$ be a PA-structure on $(\Lg,\Ln)$ over $\C$, where $\Ln$ is simple, defined by an RB-operator 
$R$ of weight $1$ on $\Ln$, with associated Lie algebras $\Lg_i$ for $i=1,\ldots ,n=\dim(V)$. 
Assume that $\Lg_0=\Ln$ and $\Lg_n$ are semisimple. Then all $\Lg_i$ are isomorphic to $\Ln$.
\end{prop}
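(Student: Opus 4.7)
The plan is to sharpen the decomposition of Corollary 3.5 using the semisimplicity of $\Lg_n$, show that every $\Lg_i$ is semisimple, and then apply Theorem 3.1 inductively. First I would show that the solvable piece $\Ln_3$ vanishes. The compositions $R^n$ and $(R+\id)^n$ are Lie homomorphisms $\Lg_n\to\Lg_0$, since $R$ and $R+\id$ are Lie homomorphisms at each step, so their kernels $\Ln_1=\ker R^n$ and $\Ln_2=\ker(R+\id)^n$ are ideals in $\Lg_n$. Semisimplicity of $\Lg_n$ then forces $\Ln_1$ and $\Ln_2$ to be direct-summand semisimple ideals, and the quotient $\Lg_n/(\Ln_1\oplus\Ln_2)$ is semisimple. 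Using $\{\Ln_1,\Ln_3\}\subseteq\Ln_1$ and $\{\Ln_2,\Ln_3\}\subseteq\Ln_2$ from Proposition 3.4, together with the fact that $\Ln_3$ is a subalgebra, this quotient is isomorphic as a Lie algebra to $(\Ln_3,[,]_n)$, which by Corollary 3.5 is in turn isomorphic to the solvable $(\Ln_3,[,]_0)$. A Lie algebra that is simultaneously semisimple and solvable is zero, so $\Ln_3=0$ and $\Lg_i=\Ln_1\dot + \Ln_2$ for every $i$. Since each $(\Ln_j,[,]_n)$ is semisimple, Corollary 3.5 further gives that $(\Ln_j,[,]_i)$ is semisimple for every $i$ and $j=1,2$.

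Thus every $\Lg_i$ is the vector space sum of two semisimple subalgebras $\Ln_1$ and $\Ln_2$. The main obstacle is the next step: concluding that $\Lg_i$ itself is semisimple. The paper's argument invokes Proposition 3.6, but since that proposition is flagged as invalid, a correct proof must appeal to the corrected decomposition theorem of \cite{BU64}---namely that a Lie algebra expressible as the vector space sum of two semisimple subalgebras is itself semisimple---to conclude that each $\Lg_i$ is semisimple.

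Finally, with every $\Lg_i$ semisimple and $\Lg_0=\Ln$ simple, Theorem 3.1 applied to the PA-structure on $(\Lg_1,\Ln)$ gives $\Lg_1$ simple and $\Lg_1\cong\Ln$. A forward induction along the chain of PA-structures $(\Lg_{i+1},\Lg_i)$, each step an application of Theorem 3.1 to the simple $\Lg_i$ and the semisimple $\Lg_{i+1}$, then yields $\Lg_i\cong\Ln$ for every $i$.
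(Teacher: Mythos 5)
Your proof is correct and follows essentially the same route as the paper's: you kill $\Ln_3$ by playing the semisimplicity of $\Lg_n$ against the solvability of $(\Ln_3,[,]_0)$ coming from Proposition \ref{3.4} and Corollary \ref{3.5}, and then use the semisimple vector-space-sum result (Proposition \ref{3.6}, which you rightly note must be replaced by the corrected argument of \cite{BU64}) to conclude that every $\Lg_i=\Ln_1\dot+\Ln_2$ is semisimple. The only divergence is the final step: the paper appeals to Koszul's theorem \cite{KOS} directly to identify all the $\Lg_i$, whereas you run a forward induction along the chain of PA-structures $(\Lg_{i+1},\Lg_i)$ using Theorem $3.1$; both endings are valid, and yours is a legitimate repackaging since Theorem $3.1$ itself rests on the Koszul--Onishchik arguments.
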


\begin{proof}
Since $\Ln_1$ and $ \Ln_2$ are kernels of homomorphisms, they are ideals in $\Lg_n$. The quotient
$\Lg_n/(\Ln_1+\Ln_2)\cong \Ln_3$ is semisimple and solvable by Proposition $\ref{3.4}$. Hence $\Ln_3=0$,
and we obtain $\Lg_n=\ker(R^n)\oplus \ker((R+\id)^n)$. Because of Corollary $\ref{3.5}$ we have the decomposition
$\Lg_i=\ker(R^n)\dot + \ker((R+\id)^n)$ for all $i<n$, where all Lie algebras $(\ker(R^n),[,]_i)$ are isomorphic, and all
Lie algebras $(\ker((R+\id)^n),[,]_i)$ are isomorphic. By Proposition $\ref{3.6}$ all $\Lg_i$ are semisimple. 
By Koszul's result \cite{KOS}, all $\Lg_i$ are isomorphic.
\end{proof}

\begin{prop}
Suppose that there is a post-Lie algebra structure on $(\Lg,\Ln)$ over $\C$, where $\Lg$ is semisimple and
$\Ln$ is complete. Then $\Ln$ must be semisimple.
\end{prop}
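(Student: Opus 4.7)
The plan is to reduce the problem to a statement about vector space decompositions of $\Ln$ as a sum of semisimple subalgebras, so that Proposition \ref{3.6} can be applied. Since $\Ln$ is complete, Corollary \ref{2.15} gives an RB-operator $R$ of weight $1$ on $\Ln$ with $x\cdot y=\{R(x),y\}$, and the Lie bracket on $\Lg$ is then the one given by \eqref{9}. The key observation to exploit is the one recorded in the discussion after Corollary $2.18$: both $R$ and $R+\id$ are Lie algebra homomorphisms from $\Lg$ to $\Ln$.

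Next, I would use this to produce two semisimple subalgebras of $\Ln$ whose sum is all of $\Ln$. Since $\Lg$ is semisimple and $R,R+\id\colon\Lg\to\Ln$ are Lie algebra homomorphisms, their images $\im(R)$ and $\im(R+\id)$ are quotients of $\Lg$ by ideals, hence semisimple subalgebras of $\Ln$. Moreover, for every $x\in\Ln$ we have the trivial identity
\[
x=(R+\id)(x)-R(x),
\]
which shows $\Ln=\im(R)+\im(R+\id)$ as a vector space.

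Finally, I would invoke Proposition \ref{3.6}: $\Ln$ is the vector space sum of two complex semisimple subalgebras, hence $\Ln$ is semisimple. The only real obstacle in this chain is Proposition \ref{3.6} itself, which the authors flag as requiring a separate (correct) proof in \cite{BU64}; granted that statement, the argument above is essentially immediate from results already collected in Sections~2 and~3.
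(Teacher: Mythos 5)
Your argument is correct and follows essentially the same route as the paper's own proof: pass to the RB-operator via Corollary \ref{2.15}, use that $R$ and $R+\id$ are homomorphisms from the semisimple $\Lg$ to get $\Ln=\im(R)+\im(R+\id)$ as a sum of semisimple subalgebras, and conclude by Proposition \ref{3.6}. The only cosmetic difference is that the paper first splits off the case where one kernel is trivial (giving $\Lg\cong\Ln$ directly via Proposition \ref{2.21}), whereas your uniform treatment covers that case as well.
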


\begin{proof}
By Corollary $\ref{2.15}$ the PA-structure is given by $x\cdot y=\{R(x),y\}$, where $R$ is an RB-operator
of weight $1$ on $\Ln$. If at least one of $\ker(R)$ and $\ker(R+\id)$ is trivial, we obtain $\Lg\cong \Ln$
by Proposition $\ref{2.21}$, part $(1)$. 
Otherwise $\Ln=\im(R)+\im(R+\id)$ is the sum of two nonzero semisimple subalgebras. By Proposition $\ref{3.6}$ $\Ln$
is semisimple.
\end{proof}

\section{PA-structures on $(\Lg,\Ln)$ with $\Ln=\Ls\Ll_2(\C)\times \Ls\Ll_2(\C)$}

In \cite{BU41}, Proposition $4.7$ we have shown that PA-structures with $\Ln=\Ls\Ll_2(\C)$ exist
on $(\Lg,\Ln)$ if and only if $\Lg$ is isomorphic to $\Ls\Ll_2(\C)$, or to one of the solvable non-unimodular
Lie algebras $\Lr_{3,\la}(\C)$ for $\la\in \C\setminus \{-1\}$. In this section we want to show an analogous result
for $\Ln=\Ls\Ll_2(\C)\times \Ls\Ll_2(\C)$. Here we will use RB-operators on $\Ln$ and an explicit classification
by Douglas and Repka \cite{DOR} of all subalgebras of $\Ln$. This classification is up to inner automorphisms, but
we will only need the subalgebras up to isomorphisms. Let us fix a basis $(X_1,Y_1,H_1,X_2,Y_2,H_2)$ of $\Ln$ consisting
of the following $4\times 4$ matrices.
\[
X_1=E_{12},\, Y_1=E_{21},\, H_1=E_{11}-E_{22},\,X_2=E_{34},\, Y_2=E_{43},\, H_2=E_{33}-E_{44}. 
\]
We use the following table.\\[0.2cm]
\begin{center}
Table $1$: Complex $3$-dimensional Lie algebras
\end{center}
\vspace*{0.5cm}
\begin{center}
\begin{tabular}{c|c}
$\Lg$ & Lie brackets \\
\hline
$\C^3$ &  $-$ \\
\hline
$\Ln_3(\C)$ & $[e_1,e_2]=e_3$ \\
\hline
$\Lr_2(\C) \oplus  \C$ & $[e_1,e_2]=e_2$ \\
\hline
$\Lr_3(\C)$ & $ [e_1,e_2]=e_2,\, [e_1,e_3]=e_2+e_3 $ \\
\hline
$\Lr_{3,\la}(\C),\, \la\neq 0$ & $[e_1,e_2]=e_2, \, [e_1,e_3]=\la e_3$ \\
\hline
$\Ls\Ll_2(\C)$ &  $[e_1,e_2]=e_3,\, [e_1,e_3]=-2e_1,\, [e_2,e_3]=2e_2$ \\
\end{tabular}
\end{center}
\vspace*{0.5cm}
Among the family $\Lr_{3,\la}(\C)$, $\la\neq 0$ there are still isomorphisms. In fact, $\Lr_{3,\la}(\C)\cong \Lr_{3,\mu}(\C)$ 
if and only if $\mu=\la^{-1}$ or $\mu=\la$. The list of subalgebras $\Lh$ of $\Ln$ is given as follows.
We first list the solvable subalgebras, then the semisimple ones and the subalgebras with a non-trivial 
Levi decomposition. \\[0.2cm]
\begin{center}
Table $2$: Solvable subalgebras
\end{center}
\vspace*{0.5cm}
\begin{center}
\begin{tabular}{c|c|c}
$\dim (\Lh)$ & Representative & Isomorphism type \\
\hline
$1$ &  $\langle X_1\rangle ,\,\langle H_1\rangle,\, \langle X_1+X_2\rangle ,\, \langle X_1+H_2 \rangle ,\,
\langle H_1+aH_2\rangle,\, a\in \C^*$ & $\C$ \\
\hline
$2$ & $\langle X_1,X_2\rangle ,\, \langle X_1,H_2\rangle ,\, \langle H_1,H_2\rangle $ & $\C^2$ \\
\hline
$2$ & $\langle X_1+X_2,H_1+H_2\rangle ,\, \langle X_1,H_1+X_2\rangle ,\, \langle X_1, H_1+a H_2\rangle ,\,a\in \C$ & $\Lr_2(\C)$ \\
\hline
$3$ & $\langle X_1,X_2,H_1+\la H_2\rangle ,\, \la\neq 0 $ & $\Lr_{3,\la}(\C)$, $\la\neq 0$ \\
\hline
$3$ & $\langle X_1,H_1, H_2\rangle ,\, \langle X_1,H_1, X_2\rangle $ & $\Lr_2(\C)\oplus \C$ \\
\hline
$4$ & $\langle X_1,H_1,X_2,H_2\rangle $ & $\Lr_2(\C)\oplus \Lr_2(\C)$ \\
\end{tabular}
\end{center}
\vspace*{0.5cm}
\begin{center}
Table $3$: Semisimple subalgebras and Levi decomposable subalgebras
\end{center}
\vspace*{0.5cm}
\begin{center}
\begin{tabular}{c|c|c}
$\dim (\Lh)$ & Representative & Isomorphism type \\
\hline
$3$ &  $\langle X_1,Y_1,H_1\rangle ,\,\langle X_1+X_2,Y_1+Y_2,H_1+H_2\rangle $ & $\Ls\Ll_2(\C)$ \\
\hline
$4$ &  $\langle X_1,Y_1,H_1,H_2\rangle ,\,\langle X_1,Y_1,H_1,X_2\rangle $ & $\Ls\Ll_2(\C)\oplus \C$ \\
\hline
$5$ &  $\langle X_1,Y_1,H_1,X_2,H_2\rangle $ & $\Ls\Ll_2(\C)\oplus \Lr_2(\C)$ \\
\end{tabular}
\end{center}
\vspace*{0.5cm}
\begin{thm}
Suppose that there exists a post-Lie algebra structure on $(\Lg,\Ln)$, where $\Ln=\Ls\Ll_2(\C)\oplus \Ls\Ll_2(\C)$.
Then $\Lg$ is isomorphic to one of the following Lie algebras, and all these possibilities do occur:
\vspace*{0.2cm}
\begin{itemize}
\item[$(1)$] $\Ls\Ll_2(\C)\oplus \Ls\Ll_2(\C)$.
\item[$(2)$] $\Ls\Ll_2(\C)\oplus \Lr_{3,\la}(\C),\, \la\neq -1$. 
\item[$(3)$] $\Lr_{3,\la}(\C)\oplus \Lr_{3,\mu}(\C),\, (\la,\mu) \neq (-1,-1)$. 
\item[$(4)$] $\Lr_2(\C)\oplus \Lr_2(\C)\oplus \Lr_2(\C)$. 
\item[$(5)$] $\Lr_2(\C)\oplus (\C^3 \ltimes \C)=\langle x_1,\ldots ,x_6\rangle $ and Lie brackets, for
 $\al \neq 0$, $\be \neq 0,-1$
\[
[x_1,x_2]=x_1,\, [x_3,x_6]= x_3,\, [x_4,x_6]=\al x_4,\, [x_5,x_6]=\be x_5.
\] 
\item[$(6)$] $\C\oplus ((\Lr_{3,\la}(\C)\oplus \C)\ltimes \C)=\langle x_1,\ldots ,x_6\rangle $ and Lie brackets, for
$\la\neq 0$, $\al\neq 0,-1$,
\[
[x_2,x_4]=x_2,\, [x_3,x_4]=\la x_3,\, [x_3,x_6]= x_3,\, [x_5,x_6]=\al x_5.
\] 
\item[$(7)$] $(\Lr_{3,\la}(\C)\oplus \C^2)\ltimes \C=\langle x_1,\ldots ,x_6\rangle $ and Lie brackets, for
$\la\neq 0$, $\al_1,\al_2\neq 0$, and $(\la,\al_1,\al_2)\neq (-1,\al_1,-\al_1-1)$,
\[
[x_1,x_3]=x_1,\, [x_2,x_3]=\la x_2,\, [x_2,x_6]= \al_1 x_2,\, [x_4,x_6]=x_4 ,\,[x_5,x_6]=\al_2 x_5.
\] 
\item[$(8)$] $(\C^2\oplus \C^2)\ltimes \C^2=\langle x_1,\ldots ,x_6\rangle $ and Lie brackets
\begin{align*}
[x_1,x_5] & =x_1, \hspace{0.5cm}  [x_2,x_5]=\al_2x_2,\,  [x_3,x_5]=\al_4x_3,\, [x_4,x_5] =\al_6 x_4, \\
[x_1,x_6] & =\al_1 x_1,\, [x_2,x_6] =\al_3x_2,\,  [x_3,x_6]=\al_5x_3,\, [x_4,x_6] =\al_7 x_4, 
\end{align*}
with one of the following conditions:
\begin{align*}
(a) & \quad \al_3=1,\, \al_5=\al_1\al_7,\, \al_6=\al_2\al_4,\; \al_1\al_2\neq 1,\, \al_4,\al_7\neq 0,-1, \\
(b) & \quad \al_4=\al_1-1,\, \al_5=-\al_1,\, \al_6=\al_2(\al_1-1),\, \al_7=\al_1\al_3-\al_1^2\al_2-\al_3,\\ 
\phantom{(b)}  & \quad \al_3-\al_1\al_2\neq 0,\, \al_1\neq 0,1.
\end{align*}
\end{itemize}
\end{thm}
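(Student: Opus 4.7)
The plan is to invoke Corollary~\ref{2.15} to reduce the problem to classifying RB-operators $R$ of weight $1$ on $\Ln=\Ls\Ll_2(\C)\oplus\Ls\Ll_2(\C)$, since this $\Ln$ is semisimple and hence complete. For each such $R$, the bracket of $\Lg$ is given by \eqref{9}, and both $R$ and $R+\id$ are Lie algebra homomorphisms $\Lg\to\Ln$; thus $\im(R)$ and $\im(R+\id)$ are subalgebras of $\Ln$ whose vector space sum is all of $\Ln$, because $x=(R+\id)(x)-R(x)$, and $\ker(R)\cap\ker(R+\id)=0$. When one of these kernels is zero, part~$(1)$ of Proposition~\ref{2.21} gives $\Lg\cong\Ln$ directly, yielding case~$(1)$; the remaining work is to classify RB-operators with both kernels nontrivial.

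The strategy is to enumerate the admissible pairs $(A,B)=(\im(R),\im(R+\id))$ using the explicit subalgebra list of Tables~$2$ and~$3$, keeping only those with $A+B=\Ln$. By Proposition~\ref{3.4}, the intersection of $A$ and $B$ contains the solvable subalgebra $\Ln_3=\im(R^n)\cap\im((R+\id)^n)$, which, combined with the dimension count $\dim A+\dim B\geq 6$, sharply constrains the combinatorics. For each admissible pair, I would determine all RB-operators $R$ realizing it, up to conjugation by $\Aut(\Ln)$ via Proposition~\ref{2.3} and the involution $R\mapsto -R-\id$, and then read off $\Lg$ via \eqref{9}. Examples in cases~$(1)$--$(4)$ are obtained from split operators (Proposition~\ref{2.7}) applied to direct vector space decompositions $\Ln=A\dot + B$ into combinations of semisimple and solvable subalgebras, together with the twisted operators of Proposition~\ref{2.5} and the product construction of Proposition~\ref{2.6}; cases~$(5)$--$(7)$ require the triangular-split operators from the lemma preceding Definition~$2.11$.

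The main obstacle will be case~$(8)$, a multi-parameter family of solvable Lie algebras with two distinct subfamilies $(a)$ and $(b)$. Here both $A$ and $B$ sit inside the maximal solvable subalgebra $\langle X_1,H_1,X_2,H_2\rangle\cong\Lr_2(\C)\oplus\Lr_2(\C)$, and $R$ is neither split nor triangular-split, so one must parametrize $R$ in a basis adapted to the decomposition of Proposition~\ref{3.4} and solve the quadratic RB equation~\eqref{rota} directly to extract the algebraic relations among the parameters $\al_i$. The constraint that $\Lg$ cannot be solvable unimodular (Theorem~$3.3$ of~\cite{BU44}) accounts for the exclusions $\la\neq -1$ in case~$(2)$, $(\la,\mu)\neq(-1,-1)$ in case~$(3)$, and the analogous forbidden parameter values in cases~$(5)$--$(8)$, and must be verified in each family separately. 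Finally, one must confirm that distinct cases yield nonisomorphic $\Lg$, in particular separating the continuous families $(7)$ and $(8)$ by invariants such as the dimensions of the derived series and the structure of the nilradical.
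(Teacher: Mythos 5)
Your overall route is essentially the paper's: reduce via Corollary~\ref{2.15} to RB-operators of weight $1$ on $\Ln$, settle the case of a trivial kernel by Proposition~\ref{2.21}, and then classify the remaining operators by a case analysis driven by the tables of subalgebras of $\Ln$, the decomposition of Proposition~\ref{3.4}, and the split and triangular-split constructions. The paper organizes the cases first by whether $\Lg$ is Levi decomposable or solvable and then by the pair $(\dim\ker(R),\dim\ker(R+\id))$, which is equivalent to your enumeration by the pair of images.

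The genuine gap is your claim that the solvable-unimodular obstruction (Theorem~$3.3$ of \cite{BU44}) accounts for the forbidden parameter values in cases $(5)$--$(8)$. It does explain $\la\neq-1$ in $(2)$, $(\la,\mu)\neq(-1,-1)$ in $(3)$, and the single excluded family in $(7)$ (which is exactly the unimodular locus $\la=-1$, $\al_2=-\al_1-1$), but it is powerless for the exclusions in $(5)$, $(6)$ and $(8)$, because the algebras being excluded there are mostly \emph{not} unimodular. For instance, type $(5)$ is never unimodular on account of the $\Lr_2(\C)$ summand ($\tr\ad(x_2)=-1$), yet $\al=0$ and $\be\in\{0,-1\}$ must still be ruled out; similarly type $(8a)$ with $\al_4=0$ is generically non-unimodular but excluded. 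In the paper these exclusions arise from a different mechanism: on the complement $\Ln_3$ of the two kernels both $R$ and $R+\id$ are invertible, so the eigenvalues $\rho$ of $R$ there satisfy $\rho\neq 0,-1$, and the resulting structure constants of $\Lg$ are expressions such as $-\rho/(\rho+1)$, which omit exactly the values $0$ and $-1$; the remaining constraints (e.g.\ $\al_1\al_2\neq 1$ in $(8a)$) are linear-independence conditions forced by the explicit subalgebra structure of $\Ln$. Without this spectral analysis your plan cannot establish the ``only these occur'' half of the theorem for cases $(5)$, $(6)$ and $(8)$: exhibiting examples and quoting unimodularity will not exclude, say, type $(5)$ with $\be=-1$.
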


\begin{proof}
By Corollary $\ref{2.15}$ it is enough to consider the RB-operators $R$ of weight $1$ on $\Ln$. Then $\ker(R)$ and
$\ker(R+\id)$ are ideals in $\Lg$. If $R$ is trivial, or one of the kernels is trivial, then we have
$\Lg\cong \Ln$, which is type $(1)$. So we assume that $R$ is non-trivial, both $\ker(R)$ and $\ker(R+\id)$
are non-zero, and $\dim(\ker(R))\ge \dim(\ker(R+\id))$. Then, for $\Ln\not\cong \Lg$, either $\Lg$ has a non-trivial 
Levi decomposition, or $\Lg$ is solvable. \\[0.2cm]
{\it Case 1:} Assume that $\Lg$ has a non-trivial Levi decomposition, i.e., that $\Lg\cong \Ls\Ll_2(\C)\ltimes \Lr$.
We claim that $\Ls\Ll_2(\C)$ is a direct summand of $\Lg$, i.e., $\Lg\cong \Ls\Ll_2(\C)\oplus \Lr$, and that
$\Lr$ is not isomorphic to $\Lr_{3}(\C)$. Then we can argue as follows. Because of Remark $2.12$ of \cite{BU44}, 
$\Lg$ cannot be unimodular, except for $\Lg\cong \Ln$. 
Thus $\Lr$ cannot be unimodular, so that $\Lg$ is isomorphic to $\Ls\Ll_2(\C)\oplus \Lr_{3,\la}(\C)$ with $\la\neq -1$. 
On the other hand, all such algebras do arise by Proposition $\ref{2.6}$ and Proposition $4.7$ of \cite{BU41}. \\[0.2cm]
{\it Case 1a:} Suppose that $\Ls\Ll_2(\C)$ is not contained in $\ker(R)$, $\ker(R+\id)$ as a subalgebra. 
Then $\dim(\ker(R+\id))=1$ and $\dim(\ker(R))\in \{1,2\}$. Let us assume, both have dimension $1$. 
The other case goes similarly. Then we have
$\Lr=\langle x_1,x_2,x_3\rangle$, $\ker(R)=\langle x_1\rangle$ and $\ker(R+\id)=\langle x_2\rangle$. Furthermore
$\im(R)\cong \Ls\Ll_2(\C)\ltimes \langle x_2,x_3\rangle$ and $\im(R+\id)\cong \Ls\Ll_2(\C)\ltimes \langle x_1,x_3\rangle$
are $5$-dimensional subalgebras of $\Ln$. By table $3$,  $\Ls\Ll_2(\C)$ is a direct summand of them. 
This implies that  $\Ls\Ll_2(\C)$ is also a direct summand in $\Lg$. Since both $\ker(R)$ and $\ker(R+\id)$ are ideals
in $\Lr$, we can exclude that $\Lr$ is isomorphic to $\Lr_3(\C)$, and we are done. \\[0.2cm]
{\it Case 1b:} $\Ls\Ll_2(\C)$ is contained in one of $\ker(R)$, $\ker(R+\id)$. Without loss of generality
we may assume that $\Ls\Ll_2(\C)\subseteq \ker(R)$. If $\ker(R)=\Ls\Ll_2(\C)$, then $\Ls\Ll_2(\C)$ is an ideal of $\Lg$, and
we have $\Lg\cong \Ls\Ll_2(\C) \oplus \Lr$, where $\Lr\cong \im(R)\le \Ln$ is not isomorphic to $\Lr_3(\C)$ by table $2$, 
and we are done.
Thus we may assume that $\dim(\ker(R))\ge 4$. If $R$ splits with subalgebras $\ker(R)$ and $\ker(R+\id)$, then 
$\Lg\cong \ker(R)\oplus \ker(R+\id)$, and $\dim (\ker(R))+\dim (\ker(R+\id))=6$.
By table $3$, $\Ls\Ll_2(\C)$ is a direct summand of $\ker(R)$, and hence of
$\Lg$. So we have again $\Lg\cong \Ls\Ll_2(\C) \oplus \Lr$, and $\Lr$ is not isomorphic to $\Lr_3(\C)$.
If $R$ is not split, it remains to consider the case $\dim(\ker(R))=4$ and $\dim(\ker(R+\id))=1$. 
We have $\Lr=\langle x,y,z\rangle$ with $\ker(R)=\Ls\Ll_2(\C)\oplus \langle x\rangle$, $\ker(R+\id)=\langle y\rangle$ 
and $[y,\Ls\Ll_2(\C)]=0$. Assume that  $[z,\Ls\Ll_2(\C)]\neq 0$.
Then $\Ls\Ll_2(\C)$ is not a direct summand of the $5$-dimensional subalgebra $\im(R+\id)$ of $\Ln$, which is a 
contradiction to table $3$. Thus we have $\Lg\cong \Ls\Ll_2(\C)\oplus \Lr$. Since $\Lr$ has two disjoint 
$1$-dimensional ideals $\langle x\rangle$ and $\langle y\rangle$, it is not isomorphic to $\Lr_3(\C)$. \\[0.2cm]
{\it Case 2:} Assume that $\Lg$ is solvable. Then $\im(R)$ and $\im(R+\id)$ are solvable subalgebras of $\Ln$ of dimension
at most $4$ by table $2$. So we have $\dim (\ker(R))\ge \dim (\ker(R+\id))\ge 2$. Thus we have the following four cases:
\begin{align*}
(2a)\quad \dim(\ker(R)) & = 4,\; \dim(\ker(R+\id))=2, \\
(2b)\quad \dim(\ker(R)) & = 3,\; \dim(\ker(R+\id))=3, \\
(2c)\quad \dim(\ker(R)) & = 3,\; \dim(\ker(R+\id))=2, \\
(2d)\quad \dim(\ker(R)) & = 2,\; \dim(\ker(R+\id))=2.
\end{align*}
For the cases $(2a)$ and $(2b)$, $R$ is split since the dimensions add up to $6$. Then $\Lg$ is a direct sum
of two solvable subalgebras, which are both isomorphic to subalgebras of $\Ln$. So we have $\Ln=\ker(R)\dot + \ker(R+\id)$
and $\Lg=\ker(R)\oplus \ker(R+\id)$. \\[0.2cm]
{\it Case 2a:} Since we have only $\Lr_2(\C)\oplus \Lr_2(\C)$ as $4$-dimensional solvable subalgebra of $\Ln$, 
we have $\Lg\cong \Lr_2(\C)\oplus \Lr_2(\C)\oplus \C^2$, which is of type $(3)$ for $(\la,\mu)=(0,0)$, 
or $\Lg\cong \Lr_2(\C)\oplus \Lr_2(\C)\oplus \Lr_2(\C)$, which is of type $(4)$. Both cases can arise.
For the first one we will show this in case $(2b)$. For the second, it follows from Proposition $\ref{2.7}$ with
$\Ln=\langle X_1,H_1,X_2,H_2\rangle \dot + \langle Y_1,Y_2+H_1\rangle$. \\[0.2cm]
{\it Case 2b:} We have $\Lg\cong \Lr_{3,\la}(\C)\oplus \Lr_{3,\mu}(\C)$. The case $(\la,\mu)=(-1,-1)$ cannot arise by
Theorem $3.3$ of \cite{BU41}. The cases $(\la,\mu)=(-1,\mu)$ for $\mu\neq -1$ arise by Proposition $\ref{2.7}$ with
\[
\Ln=\langle X_1,X_2,H_1-H_2\rangle \dot + \langle Y_1,Y_2,H_1+ \mu H_2\rangle.
\]
The other cases with $\la,\mu\neq -1$ arise by Proposition $\ref{2.6}$ and Proposition $4.7$ of \cite{BU41}. \\[0.2cm]
{\it Case 2c:} Here $\Lg$ is isomorphic to $(\Lr_{3,\la}(\C)\oplus \Lr_2(\C))\rtimes \C$ or  
$(\Lr_{3,\la}(\C)\oplus \C^2)\rtimes \C$. In the first case, $\Lr_2(\C)\rtimes \C\cong \im(R)$ is a solvable subalgebra
of $\Ln$, hence isomorphic to $\Lr_{3,\nu}(\C)$ by table $2$. So $\C$ acts trivially on $\Lr_2(\C)$, and 
$\im(R+\id)\cong \Lr_{3,\la}(\C)\rtimes \C\cong \Lr_2(\C)\oplus \Lr_2(\C)$. Then $\Lg\cong \Lr_2(\C)\oplus \Lr_2(\C)\oplus
\Lr_2(\C)$, which we have already considered in Case $(2a)$. 
For $(\Lr_{3,\la}(\C)\oplus \C^2)\rtimes \C$ we need to distinguish $\la=0$ and $\la\neq 0$. \\[0.2cm]
{\it Case 2c, $\la=0$:} By Proposition $\ref{2.3}$ we may assume that $\im(R+\id)=\langle X_1,H_1,X_2,H_2\rangle$.
Since $\ker(R)$ is an ideal of  $\im(R+\id)$ isomorphic to $\Lr_2(\C)\oplus \C$, we have $\ker (R)=\langle X_1,H_1,X_2
\rangle$. Let us consider the characteristic polynomial $\chi_R$ of the linear operator $R$ acting on $\Ln$. 
By assumption on the kernels, $\chi_R(t)=t^3(t+1)^2(t-\rho)$. \\[0.2cm]
{\it Case 2c, $\la=0$, $\rho\neq 0,-1$:} Then $R(x_6)=\rho x_6$ for $x_6=H_2+\al H_1+\be X_1+\ga X_2$.
Since $\ker(R+\id)$ is an abelian $2$-dimensional subalgebra of $\Ln$, we have
\[
\ker(R+\id)=\langle Y_1+\nu_1X_1+\nu_2H_1, Y_2+\nu_3X_2+\nu_4H_2\rangle.
\]
We want to compute $[x,y]$ for $x=x_6$ and $y\in \ker(R+\id)$. By Proposition $\ref{2.13}$ we have, using $R(x_6)=\rho x_6$
\begin{align*}
[x,y] & = \{R(x),y\}-\{R(y),x\}+\{x,y\} \\
      & = \{R(x),y\} \\
      & = \rho\{x,y\}.
\end{align*}
For $x_6=H_2+\al H_1+\be X_1+\ga X_2$ and $y \in \ker(R+\id)$ this yields, using the Lie brackets of $\Ln$ in the
standard basis $\{X_1,Y_1,H_1,X_2,Y_2,H_2\}$,
\begin{align}
[x_6,Y_1+\nu_1X_1+\nu_2H_1] & = \rho((2\al\nu_1-2\be\nu_2)X_1-2\al Y_1+\be H_1), \label{10} \\
[x_6,Y_2+\nu_3X_2+\nu_4H_2] & = \rho((2\nu_3-2\ga\nu_4)X_2-2Y_2+\ga H_2).\label{11}
\end{align}  
Since $\ker(R+\id)$ is an ideal in $\Lg$ and $\rho\neq 0$, both vectors lie again in $\ker(R+\id)$. Comparing
coefficients for the basis vectors we obtain 
\[
\be=-2\al \nu_2,\, \al(\nu_1+\nu_2^2)=0,\, \ga=-2\nu_4, \, \nu_3=-\nu_4^2.
\]
Suppose that $\al=0$. Then $x_6=H_2-2\nu_4X_2$ and $\langle X_1,H_1\rangle\cong \Lr_2(\C)$ is a direct summand of $\Lg$.
Therefore $\Lg\cong \Lr_2(\C)\oplus \C\oplus \Lr_{3,\mu}(\C)$ with $\C=\langle Y_1+\nu_1X_1+\nu_2H_1\rangle$,
$ \Lr_{3,\mu}(\C)=\langle X_2,H_2-2\nu_4X_2,Y_2+\nu_4H_2-\nu_4^2X_2\rangle $, $\mu=-(\rho+1)/\rho$, which we have already 
considered above. Hence we may assume that $\al\neq 0$ and $\nu_1=-\nu_2^2$. Consider a new basis for $\Lg$ (note that
we redefine $x_6$) given by
\begin{align*}
(x_1,\ldots,x_6)& =(X_1,-\frac{1}{2}H_1+\nu_2X_1,X_2,Y_1+\nu_2H_1-\nu_2^2X_1,Y_2+\nu_4H_2-\nu_4^2X_2, \\
                & \hspace{1cm} \frac{1}{2\rho}(H_2+\al H_1-2\al\nu_2X_1-2\nu_4X_2)),
\end{align*}
with Lie brackets
\[
[x_1,x_2]=x_1,\, [x_1,x_6]=-\frac{\rho+1}{\rho}\al x_1,\, [x_3,x_6]=-\frac{\rho+1}{\rho} x_3,\, [x_4,x_6]=\al x_4,
\, [x_5,x_6]=x_6.
\]
This algebra is of type $(5)$, if we replace $x_6$ by $x_6+\frac{\al(\rho+1)}{\rho}x_2$. It arises for the triangular-split
RB-operator $R$ with $A_{-}=\ker(R)=\langle x_1,x_2,x_3\rangle $, $\ker(R+\id)=\langle x_4,x_5\rangle$ and 
$A_0=\langle x_6\rangle$, where $x_6=H_2-2\nu_4X_2$, with the action $R(x_6)=\rho x_6$. \\[0.2cm]
{\it Case 2c, $\la=0$, $\rho= -1$:} We may assume that there exists $x_6=Y_2+v$ such that 
$(R+\id)(x_6)=\mu (H_2+\al H_1+\be X_1+\ga X_2)$ for some non-zero $\mu$ and some $\al,\be,\ga\in \C$. 
Since $\ker(R+\id)$ is an abelian subalgebra we obtain $\al=\be=0$ and  $\ker(R+\id)=\langle H_2+\ga X_2,
Y_1+\nu_1X_1+\nu_2H_1\rangle$. Then we may choose $x_6=Y_2+\ka X_2+\nu_3 H_1+\nu_4 X_1$. Then
\begin{align*}
[x_6,H_2+\ga X_2] & = \{R(x_6),H_2+\ga X_2\} \\
                 & = \{ (R+\id)(x_6)-x_6,H_2+\ga X_2\} \\
                 & = -\{ Y_2+\ka X_2, H_2+\ga X_2\} \\
                 & = -2Y_2+2\ka X_2+\ga H_2.
\end{align*}
This is not contained in $\ker(R+\id)$, which is a contradiction to the fact that $\ker(R+\id)$ is an ideal. \\[0.2cm]
{\it Case 2c, $\la=0$, $\rho= 0$:} Then we have $R(H_2)=\al H_1+\be X_1+\ga X_2\neq 0$ and 
$\ker(R+\id)=\langle Y_1+\nu_1 X_1+\nu_2 H_1, Y_2+\nu_3 X_2+\nu_4 H_2\rangle $. Since $[H_2,Y_2+\nu_1 X_1+\nu_2 H_1]=
\{\ga X_2 , Y_2+\nu_1 X_2+\nu_2 H_2\}$ is in $\ker(R+\id)$, we obtain $\ga=0$. Since $[H_2,Y_1+\nu_1 X_1+\nu_2 H_2]=
\{ \al H_1+\be X_1, Y_1+\nu_1 X_1+\nu_2 H_1\}$  is in $\ker(R+\id)$, we obtain $\al (\nu_1+\nu_2^2)=0$ and 
$\be=-2\al\nu_2$. Since  $R(H_2)\neq 0$ we have $\al\neq 0$, $\nu_1=-\nu_2^2$ and $R(H_2)=\al H_1-2\al \nu_2 X_1$.
Consider a new basis for $\Lg$ given by
\begin{align*}
(x_1,\ldots,x_6)& =(X_1,-\frac{1}{2}H_1+\nu_2X_1,X_2,Y_1+\nu_2H_1-\nu_2^2X_1,Y_2+\nu_3 X_2+\nu_4H_2, -\frac{1}{2}H_2),
\end{align*}
with Lie brackets
\[
[x_1,x_2]=x_1,\, [x_1,x_6]=\al x_1,\, [x_3,x_6]=x_3,\, [x_4,x_6]=-\al x_4.
\]
This algebra is of type $(3)$, if we replace $x_6$ by $x_6-\al x_2$. \\[0.2cm]
{\it Case 2c, $\la\neq 0$:} Then we have $\ker(R)=\langle X_1,X_2,-\frac{1}{2}(H_1+\la H_2)\rangle$. 
We again have $\chi_R(t)=t^3(t+1)^2(t-\rho)$, where we distinguish the cases $\rho\neq 0,-1$, $\rho=-1$ and 
$\rho=0$. \\[0.2cm]
{\it Case 2c, $\la\neq 0$, $\rho\neq 0,-1$:} Then we may assume that $R(x_6)=\rho x_6$ for 
$x_6=H_2+\al H_1+\be X_1+\ga X_2$. As $\ker(R+\id)$ is abelian, we have $\ker(R+\id)=\langle Y_1+\nu_1 X_1
+\nu_2 H_1, Y_2+\nu_3X_2+\nu_4H_2\rangle$. Since $V=\ker(R)\oplus \ker(R+\id)\oplus \langle x_6\rangle$, the
two elements $H_1+\la H_2$ and $H_2+\al H_1$ need to be linearly independent, i.e., $1-\al \la\neq 0$. 
By \eqref{10} and \eqref{11} we obtain $\ga=-2\nu_4$, $\be=-2\al\nu_2$, $\nu_3=-\nu_4^2$ and 
$\al(\nu_1+\nu_2^2)$. Suppose that $\al=0$. 
Then $x_6=H_2-2\nu_4X_2$. Consider a new basis for $\Lg$ given by
\begin{align*}
(x_1,\ldots,x_6)& =(Y_1+\nu_1X_1+\nu_2H_1,X_1,X_2,-\frac{1}{2}(H_1+\la H_2),Y_2-\nu_4^2 X_2+\nu_4H_2, -\frac{1}{2(\rho+1)}H_2),
\end{align*}
with Lie brackets
\[
[x_2,x_4]=x_2,\, [x_3,x_4]=\la x_3,\, [x_3,x_6]=x_3,\, [x_4,x_6]=-\la\nu_4 x_3, \, [x_5,x_6]=-\frac{\rho}{1+\rho}x_5.
\]
This is an algebra of type $(6)$, if we replace $x_4$ by $x_4+\la \nu_4 x_3$. \\
Now we assume that $\al\neq 0$. Consider a new basis for $\Lg$ given by
\begin{align*}
(x_1,\ldots, x_6)& =(X_1,X_2,-\frac{1}{2}(H_1+\la H_2),Y_2-\nu_4^2 X_2+\nu_4H_2, Y_1-\nu_2^2 X_1+\nu_2 H_1, \\
 &  -\frac{1}{2(\rho+1)}(H_2-2\nu_4 X_2+\al (H_1-2\nu_2 X_1))),
\end{align*}
with Lie brackets
\begin{align*}
[x_1,x_3] & = x_1,\, [x_2,x_3]=\la x_2,\, [x_1,x_6]=\al x_1,\, [x_2,x_6]=x_2, \\
[x_3,x_6] & =-\al\nu_2 x_1-\la \nu_4 x_2, \, [x_4,x_6]=\de x_4,\, [x_5,x_6]=\al \de x_5,
\end{align*}
where $\de=-\frac{\rho}{\rho+1}$. 
Replacing $x_6$ by $\frac{1}{\de}(x_6-\al \nu_2 x_1-\nu_4 x_2-\al x_3)$ we obtain
the Lie brackets
\[
[x_1,x_3] = x_1,\,[x_2,x_3]=\la x_2,\, [x_2,x_6]=\al' x_2,\, [x_4,x_6]= x_4,\, [x_5,x_6]=\al x_5,
\]
where 
\[
\al'=\frac{1-\al\la}{\de}=\frac{(\rho+1)(\al\la-1)}{\rho}.
\]
Note that $\al'\neq 0$ and $\al'\neq \al\la -1$ by assumption. In other words, $\al\neq \frac{\al'+1}{\la}$.
Consider a new basis for $\Lg$ given by
\[
(x_1',\ldots ,x_6')=(x_2,x_1,\frac{1}{\la}x_3,x_4,x_5,x_6-\frac{\al'}{\la}x_3),
\]
with Lie brackets
\[
[x_1',x_3'] = x_1',\,[x_2',x_3']=\la' x_2',\, [x_2',x_6']=-\al' \la' x_2',\, [x_4',x_6']= x_4',\, 
[x_5',x_6']=\al x_5',
\]
where $\la'=\frac{1}{\la}$. This is of type $(7)$. Since $\Lr_{3,\la}(\C)\cong \Lr_{3,\la'}(\C)$,
one may check that we do not only have $\al\neq \frac{\al'+1}{\la}$, but also $\al\neq \la-\al'$.
For  $\frac{\al'+1}{\la}\neq  \la-\al'$ we obtain no restriction for $\al$. However, 
for $\frac{\al'+1}{\la}= \la-\al'$ we obtain $\la=-1$ or $\la=\al'+1$, which excludes both 
$(\la,\al',\al)=(-1,\al',-\al'-1)$ and  $(\la,\al',\al)=(\la,\la-1,1)$. Rewriting this in the parameters
of the Lie brackets from type $(7)$, we obtain all cases except for $(\la,\al',\al)=(\la,\la-1,1)$ with $\la\neq -1$. 
These PA-structures
arise by a triangular-split RB-operator with $A_{-}=\ker(R)$, $A_{+}=\ker(R+\id)$ and $A_0=\langle x_6\rangle$
with the action $R(x_6)=\rho x_6$, $\rho\neq 0,-1$.  \\[0.2cm]
{\it Case 2c, $\la\neq 0$,$\rho=-1$:} This leads to a contradiction in the same way as case $2c$ with
$\la=0,\rho=-1$. \\[0.2cm]
{\it Case 2c, $\la\neq 0$,$\rho=0$:} We have $R(H_2)=\al X_1+\be X_2+\ga (H_1+\la H_2)$ and $\ker(R+\id)=
\langle x_4,x_5\rangle$ with $x_4=Y_1+\nu_1 X_1+\nu_2 H_1$, $x_5=Y_2+\nu_3 X_2+\nu_4 H_2$. Similarly to 
$\eqref{10},\eqref{11}$ we obtain $R(H_2)=\ga (H_1-2\nu_2 X_1)+\ga \la (H_2-2\nu_4 X_2)$. This implies that
$\ga\neq 0$ and $x_4=Y_1-\nu_2^2X_1+\nu_2H_1$, $x_5=Y_2-\nu_4^2X_2+\nu_4 H_2$. By setting $x_1=X_1$, $x_2=X_2$,
$x_3=-\frac{1}{2}(H_1+\la H_2)$ and $x_6=\frac{1}{2\ga}H_2$ we obtain a new basis for $\Lg$ with Lie brackets
\begin{align*}
[x_1,x_3] & = x_1,\, [x_2,x_3]=\la x_2,\, [x_1,x_6]=-x_1,\, [x_2,x_6]=\de x_2,\\
[x_3,x_6] & = \nu_2 x_1+\la^2\nu_4 x_2,\, [x_4,x_6]=x_4,\, [x_5,x_6]=\la x_5,
\end{align*} 
where $\de=-\frac{1+\la \ga}{\ga}$ with $\de\neq -\la$. Replacing $x_6$ by $x_6+\nu_2 x_1+\la \nu_4 x_2+x_3$ we obtain
the brackets
\[
[x_1,x_3] = x_1,\,[x_2,x_3]=\la x_2,\,[x_2,x_6]=\al_1 x_2,\, [x_4,x_6]=x_4,\,[x_5,x_6]=\la x_5 
\]
with $\al_1=\de+\la=-\frac{1}{\ga}$. This is of type $(7)$ with $\al_2=\la$. It arises by the triangular-split
RB-operator with $A_{-}=\langle x_1,x_2\rangle$, $A_{+}=\langle x_4,x_5\rangle$ and $A_0=\langle u,v\rangle$,
with $u=\frac{1}{\ga}(H_2-2\nu_4 X_2)$ and $v=H_1-2\nu_2 X_1+\la(H_2-2\nu_4X_2)$, and the action $R(u)=v$, $R(v)=0$. \\[0.2cm]
{\it Case 2d:}  Suppose that one of the kernels $\ker(R)$ and $\ker(R+\id)$ is non-abelian. Without loss of generality, 
let us assume that $\ker(R)\cong \Lr_2(\C)$. Write $\Lg\cong (\ker(R)\oplus \ker(R+\id))\ltimes \langle a,b\rangle$. 
Then $\ker(R)\ltimes \langle a\rangle$ is a $3$-dimensional solvable subalgebra of $\im(R+\id)$. By table $2$ we see 
that it is isomorphic to $\Lr_{2}(\C)\oplus \C$. In this case there exist nonzero 
$a'\in \ker(R)\oplus \langle a \rangle$ and $b'\in \ker(R)\oplus \langle b \rangle$ such that $[a',\ker(R)]=[b',\ker(R)]=0$.
Then $\Lg\cong \ker(R)\oplus (\ker(R+\id)\oplus \langle a',b'\rangle)$ with $\ker(R)\cong \Lr_2(\C)$, and 
$\ker(R+\id)\oplus \langle a',b'\rangle \cong \Lr_2(\C)\oplus \Lr_2(\C)$ by Table $2$. Hence we obtain
$\Lg\cong \Lr_2(\C)\oplus \Lr_2(\C)\oplus \Lr_2(\C)$, which is of type $(4)$. \\
So we may assume that $\ker(R)\cong \ker(R+\id)\cong \C^2$. Then the characteristic polynomial of $R$ has the form
$\chi_R(t)=t^2(t+1)^2(t-\rho_1)(t-\rho_2)$.  \\[0.2cm]
{\it Case 2d, $\rho_1,\rho_2\neq 0,-1$:} Suppose first that either $\rho_1\neq \rho_2$, or that $\rho_1=\rho_2$ and the
eigenspace is $2$-dimensional. Then by Proposition $\ref{3.4}$, $\Ln=\ker(R)\dot + \ker(R+\id)\dot +\langle x_5',x_6' \rangle$
with linearly independent eigenvectors $x_5',x_6'$ corresponding to the eigenvalues $\rho_1$ and $\rho_2$.
Since $\ker(R)$ is an abelian ideal in $\im(R+\id)=\langle X_1,H_1,X_2,H_2\rangle$, we may assume that 
$\ker(R)=\langle X_1,X_2\rangle$ and $[x_5',x_6']=0$. 
The decomposition $\Ln=\ker(R+\id)\dot + \im(R+\id)$ 
shows that $\ker(R+\id)$ has a basis 
$x_3 = Y_1+\al H_1+\nu_3 X_1$, 
$x_4 = Y_2+\be H_2+\nu_4 X_2$. 
Since $[x_5',x_6']=0$, we have 
$x_5' = H_1 + \nu_1 X_1 + \xi_1(H_2 + \nu_2 X_2)$,
$x_6' = H_2 + \nu_2 X_2 + \xi_2(H_1 + \nu_1 X_1)$
with $\xi_1\xi_2\neq1$. So we have by \eqref{10} and \eqref{11} 
$x_3 = Y_1 - \frac{\nu_1}{2}H_1 - \frac{\nu_1^2}{4}X_1$,
$x_4 = Y_2 - \frac{\nu_2}{2}H_2 - \frac{\nu_2^2}{4}X_2$.
Consider a basis for $\Lg$ given by
\[
(x_1,\ldots ,x_6)=(X_1,X_2,x_3,x_4,-\frac{1}{2(1+\rho_1)}x_5',-\frac{1}{2(1+\rho_2)}x_6'),
\]
with Lie brackets
\begin{align*}
[x_1,x_5] & = x_1,\,
[x_1,x_6]=\xi_2 x_1,\, 
[x_2,x_5]=\xi_1 x_2,\, 
[x_2,x_6]= x_2,\\
[x_3,x_5] & =\ga x_3,\, 
[x_3,x_6]=\de \xi_2 x_3,\, 
[x_4,x_5]=\ga \xi_1 x_4,\, 
[x_4,x_6]=\de x_4,
\end{align*}
where 
$\ga=-\frac{\rho_1}{\rho_1+1}$, 
$\de=-\frac{\rho_2}{\rho_2+1}$ 
with $\ga,\de\neq 0,-1$ and $\xi_1\xi_2\neq 1$. This is type $(8a)$. It arises by the triangular-split RB-operator $R$ with 
$A_{-}=\langle X_1,X_2\rangle$, $A_{+}=\langle x_3,x_4\rangle$ and $A_0=\langle x_5,x_6\rangle$, where $R$ 
acts on $A_0$ by $R(x_5)=\rho_1 x_5$ and $R(x_6)=\rho_2 x_6$.
Note that for $\nu_2 = \xi_2 = 0$ and $\xi_1\neq0$
we get type $(7)$ without the restriction 
$(\la,\al_1,\al_2)\neq (\la,\la-1,1)$ for $\lambda\neq-1$,
which we had in Case 2c, $\la\neq 0$, $\rho\neq 0,-1$. \\
Suppose now that $\rho_2=\rho_1\neq 0,-1$, and the eigenspace for $\rho_1$ is $1$-dimensional. 
Let $R(x_5')=\rho_1 x_5'$ and $R(x_6')=x_5'+\rho_1 x_6'$. In the same way as before we have
$x_5' = H_1 + \nu_1 X_1 + \xi(H_2 + \nu_2 X_2)$,
$x_6' = \kappa(H_2 + \nu_2 X_2)$
with $\kappa\neq0$ and
$x_3 = Y_1 - \frac{\nu_1}{2}H_1 - \frac{\nu_1^2}{4}X_1$,
$x_4 = Y_2 - \frac{\nu_2}{2}H_2 - \frac{\nu_2^2}{4}X_2$.
Consider a basis for $\Lg$ given by
\[
(x_1,\ldots ,x_6)=(X_1,X_2,x_3,x_4,-\frac{1}{2(1+\rho_1)}x_5',-\frac{1}{2(1+\rho_1)}x_6'),
\]
with Lie brackets
\begin{align*}
[x_1,x_5] & = x_1,\,
[x_1,x_6]=(\ga+1) x_1,\, 
[x_2,x_5]=\xi x_2,\, 
[x_2,x_6]= (\kappa+\xi+\ga\xi)x_2,\\
[x_3,x_5] & =\ga x_3,\, 
[x_3,x_6]=-(\ga+1) x_3,\, 
[x_4,x_5]=\ga \xi x_4,\, 
[x_4,x_6]=(\kappa\ga-\xi-\ga\xi) x_4,
\end{align*}
where $\ga=-\frac{\rho_1}{\rho_1+1}\neq 0,-1$ and $\kappa\neq0$. This is type $(8b)$. 
It arises by the triangular-split RB-operator $R$ with 
$A_{-}=\langle X_1,X_2\rangle$, $A_{+}=\langle x_3,x_4\rangle$ and $A_0=\langle x_5,x_6\rangle$, where $R$ acts
on $A_0$ by $R(x_5)=\rho_1 x_5$ and $R(x_6)=x_5+\rho_1 x_6$.\\[0.2cm]
{\it Case 2d, $\rho_1=\rho_2=0$:} We have $\Lg=\ker(R+\id)\dot + \im(R+\id)$ and we can assume that 
$\ker(R)=\langle X_1,X_2\rangle $ and $\ker(R+\id)=\langle Y_1+\nu_1 X_1+\nu_2 H_1,Y_2+\nu_3 X_2+\nu_4 H_2\rangle$.
Suppose first that $R(v)=X_1$ and $R(w)=X_2$ for some $v,w$. Then
\[
[Y_1+\nu_1 X_1+\nu_2 H_1,v]=\{Y_1+\nu_1 X_1+\nu_2 H_1,X_1\}
 = -H_1 + 2\nu_2 X_1 \in \ker(R+\id),
\]
which is a contradiction. Otherwise we see from the possible Jordan forms of $R$ that there exist $v,w$ with 
$R(v)=\alpha X_1+\beta X_2\neq 0$ and $R(w)=v$. This leads to a contradiction in the same way. \\[0.2cm]
{\it Case 2d, $\rho_1=0,\rho_2\neq 0,-1$:} This case is analagous to the second part of the case before. \\[0.2cm]
{\it Case 2d, $\rho_1=0,\rho_2=-1$:} As above we may assume that $\im(R+\id)=\langle X_1,X_2,H_1,H_2\rangle$
and $\ker(R)=\langle X_1,X_2\rangle$, and $\al H_1+\be H_2+\ga X_1+\de X_2\in \ker(R+\id)\cap \im (R+\id)$ for some
$\al,\be ,\ga,\de \in \C$. Since $\ker(R+\id)$ is abelian, we may assume that $\ker(R+\id)=\langle H_1+\nu_1 X_1,
Y_2+\nu_2 X_2+\nu_3 H_2\rangle$ for some $\nu_1,\nu_2,\nu_3\in \C$. Let $v\in \ker (R^2)$ such that 
$R(v)=\nu_4 X_1+\nu_5 X_2\neq 0$. Then
\[
[v,Y_2+\nu_2 X_2+\nu_3 H_2]=\{\nu_4 X_1+\nu_5 X_2, Y_2+\nu_2 X_2+\nu_3 H_2\}=\nu_5(H_2-2\nu_3 X_2)\in \ker(R+\id)
\]
implies that $\nu_5=0$. By $[v,H_1+\nu_1X_1]=\{\nu_4 X_1,H_1+\nu_1 X_1\}=-2\nu_4 X_1\in \ker(R+\id)$ we obtain
$\nu_4=0$, which is a contradiction to $R(v)\neq 0$.
\end{proof}

\begin{rem}
The algebras from different types are non-isomorphic, except for algebras of type $(8)$, which have
intersections with type $(3)$ and $(7)$ for certain parameter choices.
\end{rem}

\section*{Acknowledgments}
Dietrich Burde is supported by the Austrian Science Foun\-da\-tion FWF, grant P28079 
and grant I3248. Vsevolod Gubarev acknowledges support by the Austrian Science Foun\-da\-tion FWF, 
grant P28079.


\begin{thebibliography}{99}

\bibitem{ATK} F. V. Atkinson: {\it Some aspects of Baxter’s functional equation}.
J.\ Math.\ Anal.\ Appl.\ {\bf 7} (1963), 1--30.
 
\bibitem{BAG} C. Bai, L. Guo, X. Ni: {\it Nonabelian generalized Lax pairs, the classical Yang-Baxter equation
and PostLie algebras}. Comm.\ Math.\ Phys.\ \textbf{297} (2010), no. 2, 553--596.

\bibitem{BAX} G. Baxter: {\it An analytic problem whose solution follows from a simple
algebraic identity}. Pacific J.\ Math.\ {\bf 10} (1960), 731--742.

\bibitem{BED} A. A. Belavin, V. G.  Drinfel'd: {\it Solutions of the classical Yang-Baxter equation for
simple Lie algebras}. Funct.\ Anal.\ Appl.\ {\bf 16} (1982), no. 3, 159--180.

\bibitem{BEG} P. Benito, V. Gubarev, A. Pozhidaev: {\it Rota--Baxter operators on quadratic algebras}.
arXiv:1801.07037 (2018), 23 pp.

\bibitem{BU5} D. Burde: {\it Affine structures on nilmanifolds}. 
Internat.\ J. Math.\ \textbf{7} (1996), no. 5, 599--616.

\bibitem{BU19} D. Burde, K. Dekimpe, S. Deschamps: {\it The Auslander conjecture for NIL-affine
crystallographic groups}. Math.\ Ann.\  \textbf{332} (2005), no. 1, 161--176.

\bibitem{BU24} D. Burde: {\it Left-symmetric algebras, or pre-Lie algebras in geometry
and physics}. Cent.\ Eur.\ J.\ Math.\  \textbf{4} (2006), no. 3, 323--357.

\bibitem{BU34} D. Burde, K. Dekimpe and S. Deschamps: {\it LR-algebras}.
Contemp.\ Math.\  \textbf{491} (2009), 125--140.

\bibitem{BU38} D. Burde, K. Dekimpe, K. Vercammen: {\it Complete LR-structures on solvable
Lie algebras}. J.\ Group Theory \textbf{13} (2010), no. $5$, 703--719.

\bibitem{BU41} D. Burde, K. Dekimpe and K. Vercammen: {\it Affine actions on Lie groups
and post-Lie algebra structures}.
Linear Algebra Appl.\ s \textbf{437} (2012), no. 5, 1250--1263.

\bibitem{BU44} D. Burde, K. Dekimpe: {\it Post-Lie algebra structures and generalized
derivations of semisimple Lie algebras}.
Mosc.\ Math.\ J.\ \textbf{13} (2013), Issue 1, 1--18.

\bibitem{BU51} D. Burde, K. Dekimpe: {\it Post-Lie algebra structures on pairs of Lie algebras}.
J.\ Algebra \textbf{464} (2016), 226--245.

\bibitem{BU52} D. Burde, W. A. Moens: {\it Commutative post-Lie algebra structures on Lie algebras}.
J.\ Algebra \textbf{467} (2016), 183--201.

\bibitem{BU57} D. Burde, W. A. Moens, K. Dekimpe: {\it Commutative post-Lie algebra structures and
linear equations for nilpotent Lie algebras}.
arXiv:1711.01964 (2017), 1--14.

\bibitem{BU64} D. Burde, V. Gubarev: {\it Decompositions of algebras and post-associative 
algebra structures}.
arXiv:1906.09854 (2019). 

\bibitem{CAR} P. Cartier: {\it On the structure of free Baxter algebras}. 
Advances Math.\ {\bf 9} (1972), 253--265.

\bibitem{DOR} A. Douglas, J. Repka: {\it Subalgebras of the rank two semisimple Lie algebras}.
Linear Multilinear Algebra \textbf{66} (2018), no. 10, 2049--2075. 

\bibitem{ELM} K. Ebrahimi-Fard, A. Lundervold, I. Mencattini, H. Z. Munthe-Kaas: {\it Post-Lie Algebras and 
Isospectral Flows}.
SIGMA Symmetry Integrability Geom.\ Methods Appl.\ \textbf{11} (2015), Paper 093, 16 pp.

\bibitem{GUB} V. Gubarev: {\it Universal enveloping Lie Rota--Baxter algebra of pre-Lie and post-Lie algebras}. \\
arXiv:1708.06747 (2017), 1--13.

\bibitem{GUK} V. Gubarev, P. Kolesnikov: {\it Embedding of dendriform algebras into Rota--Baxter algebras}.
Cent.\ Eur.\ J.\ Math.\ {\bf 11} (2013), no. 2, 226--245.

\bibitem{GUR} L. Guo, W. Keigher: {\it Baxter algebras and shuffle products}.
Adv.\ Math.\ {\bf 150} (2000), 117--149.

\bibitem{GUO} L. Guo: {\it An Introduction to Rota--Baxter Algebra}.
Surveys of Modern Mathematics, Vol. \textbf{4} (2012), 
Somerville: Intern. Press; Beijing: Higher education press, 226 pp.

\bibitem{HEL} J. Helmstetter: {\it Radical d'une alg\`ebre sym\'etrique
a gauche}. Ann.\ Inst.\ Fourier \textbf{29} (1979), 17--35.

\bibitem{JAC} N. Jacobson: {\it A note on automorphisms of Lie algebras}. 
Pacific J.\ Math.\ \textbf{12} (1962), no. 1, 303--315.

\bibitem{HOB} X. X. Li, D. P. Hou, C. M. Bai: {\it Rota--Baxter operators on pre-Lie algebras}.
J.\ Nonlinear Math.\ Phys. \textbf{14}, no. 2 (2007), 269--289.

\bibitem{KIM} H. Kim: {\it Complete left-invariant affine structures on nilpotent Lie groups}.
J. Differential Geom.  \textbf{24} (1986), no. 3, 373--394.

\bibitem{KOS} J. L. Koszul: {\it Variante d’un th\'{e}or\'{e}me de H. Ozeki}.
Osaka J.\ Math.\  \textbf{15} (1978), 547--551.

\bibitem{LOD} J.-L. Loday: {\it Generalized bialgebras and triples of operads}.
Astérisque  No. \textbf{320} (2008), 116 pp. 



\bibitem{ONI} A. L. Onishchik: {\it Decompositions of reductive Lie groups}. 
Mat.\ Sbornik.\  \textbf{80 (122)} (1969), no. 4, 515--554.

\bibitem{ROT} G.-C. Rota: {\it Baxter algebras and combinatorial identities I}.
Bull.\ Amer.\ Math.\ Soc.\ {\bf 75} (1969), 325--329.

\bibitem{SCH} R. D. Schafer: {\it A introduction to nonassociative algebras}.
Dover Publications, New York (1995), 166 pp.

\bibitem{SEG} D. Segal: {\it The structure of complete left-symmetric algebras}.
Math.\ Ann.\ \textbf{293} (1992), 569--578.

\bibitem{SEM} M. A. Semenov-Tyan-Shanskii: {\it What is a classical $R$-matrix?}
Funktsional.\ Anal.\ i Prilozhen. \textbf{17} (1983), no. 4, 17--33.

\bibitem{VAL} B. Vallette: {\it Homology of generalized partition posets}.
J.\ Pure Appl.\ Algebra \textbf{208} (2007), no. 2, 699--725. 


\end{thebibliography}
\end{document}